\definecolor{Magenta}{cmyk}{0,1,0,0}
\definecolor{dgreen}{rgb}{0,0.6,0.}
\let\oldnormalfont\normalfont
\renewcommand{\normalfont}{\oldnormalfont\bfseries}
\newcommand{\ultrabold}{\fontseries{b}\selectfont}
\definecolor{darkblue}{rgb}{0.0, 0.0, 0.55} 
\def\section{\@startsection{section}{1}%
  \z@{.7\linespacing\@plus\linespacing}{.5\linespacing}%
  {\normalfont\Large\bfseries\itshape\ultrabold\color{darkblue}}}
\def\subsection{\@startsection{subsection}{2}%
  \z@{.5\linespacing\@plus.7\linespacing}{-.5em}%
  {\normalfont\large\bfseries\itshape\ultrabold\color{darkblue}}}
\let\oldsection\section
\renewcommand{\section}{\@ifstar{\@starredsection}{\@unstarredsection}}
\newcommand{\@starredsection}[1]{\oldsection*{\uline{#1}}}
\newcommand{\@unstarredsection}[1]{\oldsection{\uline{#1}}}
\let\oldsubsection\subsection
\renewcommand{\subsection}{\@ifstar{\@starredsubsection}{\@unstarredsubsection}}
\newcommand{\@starredsubsection}[1]{\oldsubsection*{\uline{#1}}}
\newcommand{\@unstarredsubsection}[1]{\oldsubsection{\uline{#1}}}
\newtheoremstyle{underlinedthm}
  {6pt}
  {6pt}
  {\itshape}
  {}
  {\bfseries\itshape\ultrabold\color{darkblue}}
  {.}
  {.5em}
  {\uline{\thmname{#1}\thmnumber{ #2}\thmnote{ (#3)}}}
\newtheoremstyle{underlineddef}
  {6pt}
  {6pt}
  {\normalfont}
  {}
  {\bfseries\itshape\ultrabold\color{darkblue}}
  {.}
  {.5em}
  {\uline{\thmname{#1}\thmnumber{ #2}\thmnote{ (#3)}}}
\theoremstyle{underlinedthm} 
\def\@settitle{\begin{center}%
  \baselineskip14\p@\relax
  \huge\bfseries\itshape\ultrabold\color{darkblue}%
  \expandafter\uline\expandafter{\@title}
  \end{center}%
}
\newtheorem{theorem}{Theorem}[]
\newtheorem{definition}{Definition}[]
\newtheorem{discussion}{Discussion}[]
\newtheorem{lemma}{Lemma}
\newtheorem{proposition}{Proposition}
\newtheorem{notation}[definition]{Notation}
\newtheorem{corollary}{Corollary}
\newtheorem{conjecture}{Conjecture}
\theoremstyle{remark}
\newtheorem{remark}[theorem]{Remark}
\newcommand{\Zdisc}{\mathfrak{disc}_{\mathbb{Z}}}
\newcommand{\disc}{\mathfrak{disc}}
\newcommand{\ncp}[1]{\langle#1\rangle}
\newcommand{\Q}{\mathbb{Q}}
\newcommand{\Qbar}{\overline{\mathbb{Q}}}
\newcommand{\Z}{\mathbb{Z}}
\newcommand{\Zbar}{\overline{\mathbb{Z}}}
\newcommand{\R}{\mathbb{R}}
\newcommand{\C}{\mathbb{C}}
\newcommand{\K}{\mathbb{K}}
\renewcommand{\P}{\mathbb{P}}
\newcommand{\A}{\mathbb{A}}
\newcommand{\p}{\mathfrak{p}}
\newcommand{\q}{\mathfrak{q}}
\renewcommand{\O}{\mathcal{O}}
\newcommand{\OK}{\mathcal{O}_{\mathbb{K}}}
\newcommand{\nm}{\mathfrak{Nm}}
\newcommand{\F}{\mathbb{F}}
\newcommand{\Proj}{\mathbb{Proj}}
\renewcommand{\pounds}{\eth}
\begin{document}

\title[Pseudo and Sudo maximal Rings]{Weighted enumeration of number-fields and counting points that take bounded squarefree values along certain polynomials using Pseudo and Sudo maximal orders}
\author[G.D.Patil]{Gaurav Digambar Patil}
\address{LMSI address}
\email{gaurav.patil@lmsi.org}
\href{https://orcid.org/0009-0008-7073-0160}{orcid.org/0009-0008-7073-0160}
\date{\today}

\begin{abstract}
    Dedekind Kummer for Binary rings (from \cite{Corso2005DecompositionOP}) allows us to relate the local decomposition of a binary form to the local decomposition of the binary ring. This allows us to relate the irreducible decomposition (\cite{patilUniqueDecomp}) of the binary ring to the local decomposition of the binary form. On the other hand, the root distribution (local) and hence irreducible decomposition(local) is connected to the discriminant and the higher singular/non singular loci of the discriminant form. This allows us to relate the locally non singular binary forms (non-singular along the discriminant) with certain local decompositions of the binary form. We define a subclass of irreducible orders called \textit{pseudo-maximal orders}, and subsequently define \textit{sudo-maximal orders} as those whose irreducible decomposition consists entirely of pseudo-maximal orders to mimic this local behaviour of a binary ring not on the locally non singular locus of the discriminant form. Because these rings are ``close'' to being maximal (i.e., the ring of integers) one can enumerate the number of such rings in a number-field very easily, which we do. Also, this local structure (called Sudo-maximal ism) is characterized by being close to maximal it is shown by  any intermediate order as well. This means it is shown by weakly divisible rings (\cite{patil2024weaklydivisiblerings}) coming from similarly restricted binary forms as well. As noted in \cite{patil2024weaklydivisiblerings}, these rings allow us to get number-fields associated to relatively larger height binary form, but with much smaller discriminant.
        
    Noting that the local non-singularness is restrictive enough that all intermediate rings of a form that is non singular are in fact weakly divisible, including the ring of integers and thus are able to realize the discriminant of the ring of integers in this case (which has to be squarefree) as a polynomial in terms of some expanded coefficients parametrizing weakly divisible rings. We thus construct a polynomial $\pounds_n$ whose squarefree locus is in one-to-one correspondence with this locally non-singular locus. 
    
    To estimate the number of locally non-singular binary forms in appropriate regions, we use a version of the Ekedahl sieve tailored to the discriminant polynomial for this type of counting in \cite{patilEkedahlSingularDiscriminant}. These estimates in the above contexts give us, a weighted count for the number of degree-$n$ number-fields with bounded discriminant, as well as a weighted lower bound for number-fields whose rings of integers admit certain geometric presentations like $\Z[\alpha,\beta]$ where $\alpha\beta\in \Z$ and a lower bound for how often does $\pounds_n$ takes bounded squarefree values up to obvious symmetries of $\pounds_n$
\end{abstract}
\maketitle
\section{Introduction}
We define the quantities $S(n,X)$ and $N(n,X)$ as follows:
\begin{definition}
    \begin{align*}
    S(n,X)&:=\{\K:[\K:\Q]=n, Gal(\overline{\K}/\Q)\simeq S_n,\disc(\K)<X\} \\
    N(n,X)&:=\#S(n,X)
    \end{align*}   
\end{definition}
Malle's conjecture (for $S_n$) \cite{MalleConjII} predicts that
    \[
    N(n,X)\simeq c_n X
    \]
where $c_n$ is a positive constant depending only on $n.$ This conjecture is known for small values of $n$, namely $n\le 5.$ 

In this paper, we establish the following asymptotic lower bound:
\begin{theorem}
    We show that $N(n,X)\gg_n X^{\frac{1}{2}+\frac{1}{n-\frac{4}{3}}-\epsilon}$ infinitely often.
\end{theorem}
The history of lower bounds is summarized in \cref{tab:lower_bounds}.

\begin{table}[htbp]
    \centering
    \renewcommand{\arraystretch}{2} 
    \begin{tabular}{|p{2.5cm}|p{3.25cm}|p{5cm}|p{4cm}|}
        \hline
        \textbf{Authors and Source} & \textbf{Lower Bound for $N(n,X)$} & \textbf{Remarks} & \textbf{Geometry} \\
        \hline
        Malle (2002) \cite{MalleConjI} & 
        $X^{\frac{1}{n}}$ & 
        & $\Z[\alpha]\subseteq\OK$ where $\alpha$ satisfies a very specific type of polynomial only. $\A^1(\Zbar).$
        \\
        \hline
        Ellenberg-Venkatesh (2006) \cite{Ellenberg2003TheNO} & 
        $X^{\frac{1}{2}+\frac{1}{n^2}}$ & 
        They actually showed that the quantity is $\gg X^{\frac{1}{2}+\frac{1}{n}}$ infinitely often, though showing this universally ran into technical difficulties. &  
        Examines all possible polynomials bounded by height and organizes roots by the number of $\alpha\in \OK$ with bounded discriminant, coming from the height. $\A^1(\Zbar).$
        \\
        \hline
        Bhargava-Shankar-Wang \cite{BSW-monicsquarefree} & 
        $X^{\frac{1}{2}+\frac{1}{n}}$ &
        & 
        Counts number-fields $\K$ in which one can find $\alpha\in \OK$ with $\Z[\alpha]=\OK$ of limited height, using local conditions to eliminate redundancies from the previous strategy. $\A^1(\Zbar).$
        \\
        \hline
        Bhargava-Shankar-Wang \cite{bhargava2022squarefree} & 
        $\gg_n X^{\frac{1}{2}+\frac{1}{n-1}}$ & 
        & 
        Counts number-fields $\K$ in which one can find $\delta\in \K$ with $\Z[\delta]\cap \Z[\delta^{-1}]=\OK$ of limited height. Shifts parametrization space from monic polynomials to binary forms. $\P^1(\Zbar).$\\
        \hline
        Patil \cite{patil2024weaklydivisiblerings} (Our previous paper) & 
        $\substack{ X^{\frac{1}{2} +\frac{1}{n-1} + \frac{1}{5n^3}+\epsilon}: n \text{ odd} \\ X^{\frac{1}{2} +\frac{1}{n-1} + \frac{1}{88n^8}+\epsilon}: n \text{ even} }$ & 
        
        & Rings associated to points satisfying $aXY-bZ^2$ in $\P^2(\Zbar).$
        \\
        \hline
    \end{tabular}
    \caption{Summary of lower bounds for $N(n,X)$}
    \label{tab:lower_bounds}
\end{table}
Note that in \cite{patil2024weaklydivisiblerings}, we established a lower bound for rings with discriminant up to $X$ of approximately $X^{\frac{1}{2}+\frac{1}{n-4/3}}$. However, the corresponding number-field count fell short of this bound, as the sieve used to distinguish number-fields via local conditions does not apply to the highly skewed space parametrizing weakly divisible rings. In this paper, we develop a sieve—building upon the work of Ekedahl, Poonen, Stoll, and Bhargava—that operates effectively in these skewed regions while enforcing modular conditions within the error terms. This allows us to yield a weighted count of various types of number-fields bounded by discriminant. 

\begin{definition} We define 
    \[
        N^*(n,X):=\sum_{\K\in S(n,X)}\frac{1}{\sqrt{\disc(\K)}}.   
    \]
\end{definition}

\begin{remark}
    Malle's conjecture for $N(n,X)$ is equivalent (via partial summation) to 
    \[
        N^*(n,X)\simeq 2c_n X^{\frac{1}{2}}.
    \]
\end{remark}

\begin{theorem} We show that for $n\ge 5$,
    \[
    N^*(n,X)\gg_n X^{\frac{1}{n-\frac{4}{3}}} \frac{(\log\log X)^{\frac{2n^2-6n+4}{3n^2-13n+12}}}{(\log X)^{\binom{n}{2}-1 +\frac{2n^2-6n+4}{3n^2-13n+12}}}.\]
\end{theorem}

\begin{remark}
    We can interpret $\frac{N^*(n,X)}{2\sqrt{X}+\zeta(1/2)}$ as the weighted average of the number of number-fields of a given discriminant less than $X$. Thus, we may view 
    \[
    \frac{N^*(n,X)}{2\sqrt{X}+\zeta(1/2)}\cdot X
    \]
    as a weighted lower bound for the number of number-fields with bounded discriminant. We will thus show that the weighted lower bound for the number of number-fields with discriminant bounded by $X$ is
    \[
    \gg X^{\frac{1}{2}+\frac{1}{n-\frac{4}{3}}-\epsilon}.
    \]
\end{remark}

As a corollary, our result implies 
\[
\limsup_{X\longrightarrow\infty} \frac{N(n,X)}{X^{\frac{1}{2}+\frac{1}{n-\frac{4}{3}}-\epsilon}}=\infty.
\]
We also deploy this approach to count how often a specific multivariate polynomial $\pounds_n$ (see \cref{defining pounds}) takes a squarefree value bounded by $X$. This polynomial is structurally similar to the discriminant polynomial in terms of its coefficients. More specifically, we show:
\begin{theorem}
If $S(X)$ denotes a set of points $P$ such that $\pounds_n(P)$ is squarefree and $\le X$, such that for any distinct points $Q,Q'\in S(X)$, $Q$ is not equivalent to $Q'$ up to the action given in \cref{symmetriesofpounds}, then 
\[
\#S(X)\gg_n\frac{X^{\frac{1}{2}+\frac{1}{n-1.5}}(\log \log X)^{\frac{1}{n-1.5}}}{(\log X)^{1+\frac{1}{n-1.5}}}
\]
provided that $n\ge 5.$
\end{theorem}
We achieve this by demonstrating a bijection between elements satisfying the Ekedahl sieve conditions for the discriminant polynomial and points at which $\pounds_n$ takes squarefree values.

Given the interest in enumerating distinct number-fields whose rings of integers possess a specific structure (i.e., enumeration of number-fields parameterized by certain varieties), we also deploy this sieve on monic polynomials rather than binary forms.
\begin{theorem}
    If $a_s$ denotes the number of number-fields with squarefree discriminant $s$ whose ring of integers can be written as $\Z[\alpha,\beta]$ where $\alpha\beta\in \Z,$ then 
    \[
    \sum_{s\le X} \frac{a_s}{\sqrt{s}}\gg_n X^{\frac{1}{n-1}+\frac{1}{n(n-1)(n-2)}}\frac{(\log\log X)^{\frac{1}{n-1}+\frac{1}{n(n-1)(n-2)}}}{(\log X)^{\binom{n}{2}-1+\frac{1}{n-1}+\frac{1}{n(n-1)(n-2)}}}.
    \]
\end{theorem}

Since all of our sieves capture number-fields with squarefree discriminants, we can also apply these results to unramified $A_n$-extensions of $\Q(\sqrt{s})$ as $s$ varies.
\begin{theorem} If $\frac{\log|Cl_{\Q(\sqrt{s})}[A_n]|}{\log|n!/2|}$ denotes the number of unramified $A_n$-extensions of $\Q(\sqrt{s})$, then
    \[
    \sum_{s<X}\frac{\log|Cl_{\Q(\sqrt{s})}[A_n]|}{\sqrt
    {s}}\gg_n X^{\frac{1}{n-\frac{4}{3}}}\frac{(\log\log X)^{\frac{2n^2-6n
    +4}{3n^2-13n+12}}}{(\log X)^{\binom{n}{2}-1 +\frac{2n^2-6n
    +4}{3n^2-13n+12}}}
    \]
\end{theorem}
For comparison, Theorem 4 in \cite{bhargava2022squarefree} shows that $\sum_{s<X}\log|Cl_{\Q(\sqrt{s})}[A_n]|\gg_n X^{\frac{1}{2}+\frac{1}{n-1}}$.
\section{Preliminaries: Weakly Divisible Forms and Associated Rings}

Before we recall the definition of weakly divisible ring, let us briefly recall the definition of Binary Basis  associated to Binary forms (first defined (as far as we can trace) in Birch Merriman finiteness theorems (\cite{BirchMerrimanFiniteness}) and later used by various papers (\cite{bhargava2022squarefree},\cite{MM-ONEFINE}\cite{Corso2005DecompositionOP}\cite{Nakagawa1989}) which builds into Weakly Divisible rings (\cite{patil2024weaklydivisiblerings}). We give this for completeness sake.
\par
\begin{definition}
If
\begin{equation*}
  f(X,Y):= a_0X^n+a_{1}X^{n-1}Y +\cdots +a_nY^n
\end{equation*} 
denotes a binary form of degree $n$ with $a_0\neq 0$ and  if $\delta$ denotes the image of $X$ in the algebra $\Q[X]/(f(X,1)),$ then we define

\begin{equation*}
    \ncp{B_0,B_1,\cdots,B_{n-1}}:= \ncp{1,\: a_0 \delta , \: a_0\delta^2+a_{1}\delta,\:\cdots, \:\sum_{i=0}^{k-1}a_{i}\delta^{k-i},\:\cdots,\:\sum_{i=0}^{n-2}a_{i}\delta^{n-1-i}}
\end{equation*}
and define $R_f$ as the $\Z$ span of the basis above, that is
\begin{equation}\label{basis-binary}
    R_f:=\Z\ncp{B_0,B_1,\cdots, B_k,\dots,B_{n-1}}.
\end{equation}

\end{definition}
\begin{remark}
    We refer to this basis as the canonical basis attached to $f.$  Also, there is an annoying way to  define this basis to make it independent of the condition $a_0\neq 0$ by defining it using the multiplication table...but why bother, this is far more illuminating for our purposes and we avoid reducible polynomials anyways. 
\end{remark}

The following definition is a slight generalization of the one in \cite{patil2024weaklydivisiblerings}.
It is far more symmetric and amenable to $GL_2$ action, however the original is sufficient for counting purposes as we only apply it to Ultra Weakly Divisible polynomials.

\begin{definition}\label{Weakly-Divisible polynomials}
We say a binary form $f(x,y)\in \Z[x,y]$ of degree $n$ is weakly divisible by $m\in \Z_{>0}$ if there exists an $\ell \in \P(\sfrac{\Z}{m\Z})$ such that for any lift $[a:b]\in \P(\Z)$ (with $\gcd(a,b)=1$) of $\ell,$ we have 
\begin{align*}
    & m^2|f(a,b)\\
    &m|\frac{\partial f}{\partial x}(a,b) \textit{ and }  m|\frac{\partial f}{\partial y}(a,b)
\end{align*}
Thus, if $\ell\in \sfrac{\Z}{m\Z}$, then $f$ is weakly divisible by $m$ at $\ell$ if and only if $m^2|f(\ell,1)$ and $m|\frac{\partial f}{\partial x}(\ell,1).$
\end{definition}
\begin{definition}\label{notation $f_l$}
    Given a binary form $f$ we set $f_l=f_l(x,y):=f(x+ly,y).$
\end{definition}
\begin{discussion}\label{P(Z/mZ)representation}
    We may represent elements of $\P(\sfrac{\Z}{m\Z})$ as $(m_1,m_2,l)$ where $l\in \Z/m_2\Z$, $m_1m_2=m$ and $\gcd(m_1,m_2)=1.$ In traditional terms, if $[a:b]\in \P(\sfrac{\Z}{m\Z})$ then we may set $m_1=\gcd(m,a)$, $m_2=\frac{m}{m_1}$ and $l$ is defined by $al\equiv b\bmod m_2.$ In this set up $f$ is weakly divisible at $(m_1,m_2,l)$ if and only if $f_l$ has the following form
\begin{equation*}
    f_l=m_1^2a_0x^n+m_1a_{1}x^{n-1}y+a_{2}x^{n-2}y^2+\cdots+a_{n-2}x^2y^{n-2} +m_2a_{n-1}xy^{n-1}+m_2^2a_ny^n.
\end{equation*}
\end{discussion}
See \cite{patil2024weaklydivisiblerings} for context for this next definition.
\begin{definition}
    If $\ncp{B_0,B_1,\cdots B_{n-1}}$ denotes the basis associated to $f$ in \cite{Nakagawa1989}, then we define
    \begin{equation*}R'_{(f,m_1,m_2)}:=\Z\ncp{B_0,\frac{B_1}{m_1},\cdots,B_{n-2},\frac{B_{n-1}}{m_2}}.
\end{equation*}
\end{definition}

We recall the following theorem 4 from \cite{patil2024weaklydivisiblerings}.
\begin{theorem}\label{WeaklyDivisibleDefiningTheorem}
If $f$ is weakly divisible by $m$ at $l\in \frac{\Z}{m\Z},$ then $R'_{(f_l,1,m)}$ is a ring.
We call this ring as the weakly divisible ring associated to $(f,l,m)$ and denote it by $R'_{(f,l,m)}$.
\end{theorem}
\begin{remark}\label{discriminint of R'}
    Since, discriminant of $\ncp{B_0,B_1,\cdots B_{n-1}}=\disc(f),$ we get $\disc(R'_{(f,l,m)})=\frac{\disc(f)}{m^2}$
\end{remark}
\begin{corollary}\label{monicUWD}
    If $f$ is monic and weakly divisible by $m$ at $l$ then $R_{f_l,m}\simeq \Z[\alpha,\beta]$ where $\alpha \beta =\frac{f(l)}{m}.$
\end{corollary}
Clearly, one can  modify the above theorem in the context of the more symmetric definition of weakly divisible.
\begin{theorem}\label{WeaklyDivisibleDefiningTheorem2}
If $f$ is weakly divisible at $(m_1,m_2,l)$ then $R'_{(f_l,m_1,m_2)}$ is a ring.
We call this ring as the weakly divisible ring associated to $(f,m_1,l,m_2)$.
We will often refer to $R'_{(f_l,m_1,m_2)}$ by $R'_{(f,m,l*)}$ where $l^*\in \P(\sfrac{\Z}{m\Z})$ corresponds to $(m_1,m_2,l)$ in \cref{P(Z/mZ)representation}.
\end{theorem}
Note that the above theorem does not need $\gcd(m_1,m_2)=1.$

\section{Preliminaries: Fundamental Theorem of Orders and irreducible orders.}
Recall the following definitions and theorems from \cite{patilUniqueDecomp}.
\begin{notation}
    Given an integral domain $R,$ we define 
        \[
        M(R):=\{\p\subseteq R: \p \textit{ is a non-zero prime ideal in } R\}.
        \]
\end{notation}
\begin{notation}
    We set 
    \[
    S(R):=\{\rho\in M(R): R_\rho \textit{ is not a DVR.}\}.
    \]
\end{notation}
\begin{theorem}[Fundamental Theorem of Orders]\label{FTO}
    Every order $\O$ can be written as an intersection of irreducible orders in a unique way such that the conductors of the irreducible orders are pairwise co-prime.
    The index (and conductor) of $\O$ in $\OK$ will be the product of the indices (and conductors) of the irreducible orders in $\OK$ in the given decomposition. This representation is given by 
    \[
    \O=\cap_{\rho\in S(\O)}(\O_\rho\cap \OK)
    \]
    where the localization of the order for a prime in $S(\O)$ matches with the localization of the corresponding irreducible component, namely $(\O_\rho\cap \OK)$.
\end{theorem}
\begin{notation}
    For an integral domain $R$, we denote the integral closure of $R$ by $\overline{R}$.
\end{notation}
\begin{definition}
     For $\rho\in M(R),$ we denote the set elements $v$ in $M(\overline{R})$ satisfying $v\cap R=\rho$ by $\overline{\rho}$.
\end{definition}
\begin{definition}
    For any integral domain $R,$ whose fractional-field is the number-field $\K,$ and any non-zero prime ideal of $\rho\in M(R),$ we define 
    \[
    ef(\rho):=\sum_{v\in \overline{\rho}}e_v \cdot f_v.
    \] 
    Let $\rho\cap \Z=(p)$.
    We then define $f(\rho)$ by $|\O/\rho|=p^{f(\rho)}$.
    We define $e(\rho)=\frac{ef(\rho)}{f(\rho)}.$
\end{definition}
\begin{definition}\label{ef-definition}
    If $\O$ is an irreducible order with $S(\O)=\{\rho\},$ we define 
    \[
    ef(\O):=ef(\rho), f(\O)=f(\rho) \textit{ and } e(\O):=e(\rho).
    \]
\end{definition}
\begin{lemma}\label{Normefrelation}
    If $p$ is a prime over $\Z$, has primary decomposition $\prod_i q_i=\cap_i q_i$ in some integral domain $R$, whose fractional field is the number-field $\K,$ where $\q_i$ is $\rho_i$-primary ideal then $\nm(q_i)=p^{ef(\rho_i)}.$ In other words, $ef(\rho)$ also denotes the $\Z_p$-rank of $\varprojlim \sfrac{R}{\rho^k}$.
\end{lemma}
\begin{proposition}[Bijection:Irreducible orders-Local algebras in p-adic extensions]
    There is a natural bijection between local sub-rings of $\prod_{v\in S}\O_{\K,v}$ and irreducible orders $\O\subsetneq \OK$ satisfying $\sqrt{c_{\O}}=\prod_{v\in S}v$ given by 
    \[ 
    \O\longrightarrow \varprojlim \sfrac{\O}{\rho^k}.
    \]
    The inverse of this map is given by intersecting the given local ring in $\prod_{v\in S}\O_{\K,v}$ with $\OK$ as it sits diagonally in $\prod_{v\in S}\O_{\K,v}$. We thus get the following proposition.
\end{proposition}
\section{Classification of Irreducible orders $\O$ with $ef(\O)=1$ and $ef(\O)=2$}
\subsection{$ef(\O)=1$}\hfill\vspace{5pt}

There exist no irreducible order $\O$ in $\O_K$ with $ef(\O)=1$ as this would be akin to finding proper unital sub-ring of the p-adic integers $\Z_p$.
\subsection{$ef(\O)=2$}
\begin{lemma}\label{Pseudomax-precursor}
    If $A=\Z_p+\Z_p\cdot t$ is an algebra of rank $2$ over $\Z_p$ then all sub-rings of $A$ which have rank $2$ over $\Z_p$ and containing $\Z_p$ are given by $\Z_p +p^kA$ for some $k\in \Z_{\ge0}.$ The index of $\Z_p +(p^k)$ in $A$ is $p^k.$
\end{lemma}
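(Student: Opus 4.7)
The plan is to exploit the short exact sequence of $\Z_p$-modules
\[
0 \to \Z_p \to A \to A/\Z_p \to 0,
\]
and leverage the fact that $\Z_p$ is a DVR so its submodules are completely classified as $0$ and $p^k \Z_p$ for $k \ge 0$. Since $A$ has rank $2$ over $\Z_p$ and is generated by $1, t$, the elements $1, t$ are $\Z_p$-linearly independent, giving $A \cong \Z_p \oplus \Z_p t$ as a $\Z_p$-module, and the quotient $A/\Z_p$ is identified with $\Z_p$ via the image of $t$.

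First I would verify that each candidate $B_k := \Z_p + p^k A = \Z_p \oplus p^k \Z_p t$ is in fact a subring, not merely a submodule. Writing $t^2 = a + bt$ with $a, b \in \Z_p$ (which we may do since $A$ is rank two), we have
\[
(p^k t)(p^k t) = p^{2k} a + p^{2k} b \, t = p^{2k} a + p^k(p^k b)t \in \Z_p + p^k \Z_p t,
\]
so $B_k$ is closed under multiplication. The module quotient $A/B_k \cong \Z_p t / p^k \Z_p t \cong \Z_p/p^k\Z_p$ has size $p^k$, establishing the index claim.

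For the converse, let $B \subseteq A$ be any rank-$2$ subring containing $\Z_p$, and let $\overline{B}$ denote its image in $A/\Z_p \cong \Z_p$. Because $B$ has rank $2$ and already contains $\Z_p$, $\overline{B}$ must be a nonzero $\Z_p$-submodule of $\Z_p$, hence $\overline{B} = p^k \Z_p$ for a unique $k \ge 0$. Lifting, $B$ contains some element $p^k t + c$ with $c \in \Z_p \subseteq B$, whence $p^k t \in B$ and therefore $B \supseteq \Z_p + p^k \Z_p t = B_k$. Comparing images in $A/\Z_p$ forces equality $B = B_k$.

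There is no real obstacle here; the argument is routine once one recognises that everything reduces to the classification of $\Z_p$-submodules of $\Z_p$. The only point requiring any care is the multiplicative closure of $B_k$, which is what makes essential use of the hypothesis that $A$ has rank exactly $2$ (so that $t^2$ lies in $\Z_p \oplus \Z_p t$, giving the needed relation).
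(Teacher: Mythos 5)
Your proposal is correct and follows essentially the same route as the paper: write the subring as $\Z_p + \Z_p r$ using the split exact sequence $0 \to \Z_p \to M \to M/\Z_p \to 0$, identify the image in $A/\Z_p \cong \Z_p$ with some $p^k\Z_p$, and conclude $M = \Z_p + p^k A$. You additionally verify multiplicative closure of $\Z_p + p^k A$ (which the paper leaves implicit), but the core argument is identical.
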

\begin{proof}
    If $M$ is a sub-ring of $A$ which is rank $2$ over $\Z_p$ may be seen as $\Z_p +\Z_pr$ where $r=a+bt$ with $a,b\in \Z_p$.
We further write $b=p^ku$ where $u\in \Z_p$ is a unit, i.e. $u\Z_p=\Z_p.$ Clearly,
    $$
    \Z_p +\Z_p r = \Z_p +\Z_p bt=(\Z_p+\Z_p u p^k)+\Z_p u p^k t = \Z_p + (\Z_p u p^k+\Z_p u p^k t) =\Z_p +p^kA.
$$
\end{proof}
We give a structure theorem for irreducible orders with $ef(\O)=2$.
Similar Structure theorems can be given easily using corresponding ring classification theories for irreducible orders with $ef(\O)\le 4.$ With some difficulty, the authors also expect such theorems possible for $ef(\O)=5$ by extending Bhargava's work for quintic rings to $\Z_p.$
\begin{definition}
     A {\em Pseudo Maximal Order} in $\K$ is an irreducible order $\O$ satisfying $ef(\O)=2.$ See \cref{ef-definition} for the definition of $ef.$
\end{definition}
If $\O$ is an irreducible order with $S(\O)=\{\rho\}$, then its conductor is made up of prime ideals (in $\K$) in $\overline{\rho}$.
Since there is a limited number of ways of appropriately partitioning $2$, we get the following.
\begin{lemma}\label{possiblelocalPseudo}
    If $\O$ is a Pseudo Maximal Order, then one of the following must be true: 
    \begin{enumerate}[A)]
        \item  $\overline{\rho} = \{v,w\} \textit{ with } f_v=f_w=e_v=e_w=1.$
        \item $\overline{\rho} = \{v\} \textit{ with } f_v=2=2e_v.$
        \item $\overline{\rho} = \{w\} \textit{ with } 2f_v=2=e_v.$
    \end{enumerate}
\end{lemma}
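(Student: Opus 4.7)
The plan is to reduce the lemma to a purely combinatorial statement about partitioning the integer $2$ into a sum of products of positive integers. By \cref{ef-definition}, the hypothesis that $\O$ is a Pseudo Maximal Order is exactly the assertion
\[
ef(\O) \;=\; ef(\rho) \;=\; \sum_{v\in\overline{\rho}} e_v\, f_v \;=\; 2.
\]
So all the arithmetic content has been absorbed into the notation, and the lemma becomes a direct case split on how this sum can be realized.

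The first step I would carry out is to verify that $\overline{\rho}$ is nonempty. This follows from going-up for the integral extension $\O\hookrightarrow\OK$: the maximal ideal $\rho\in S(\O)$ must be lain over by at least one prime of $\OK$, and by the discussion preceding \cref{FirstDecomp} these lifts are precisely the elements of $\overline{\rho}$. Combined with the standing conventions $e_v\ge 1$ and $f_v\ge 1$ for every non-Archimedean place $v$, this tells me each summand $e_v f_v$ is at least $1$, so the equation above already forces $|\overline{\rho}|\in\{1,2\}$ (three or more summands would push the sum above $2$).

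The remaining step is a finite case analysis. If $|\overline{\rho}|=2$, writing $\overline{\rho}=\{v,w\}$, the only way two positive integers sum to $2$ is $1+1$, and the only way each of $e_v f_v$ and $e_w f_w$ equals $1$ is $e_v=f_v=e_w=f_w=1$; this is the first conclusion. If $|\overline{\rho}|=1$, writing $\overline{\rho}=\{v\}$, we need $e_v f_v=2$, and since the factorizations of $2$ into positive integers are only $1\cdot 2$ and $2\cdot 1$, we obtain either $(e_v,f_v)=(1,2)$ or $(e_v,f_v)=(2,1)$, giving the second and third listed configurations.

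There is no genuine obstacle here: once the bookkeeping in the definition of $ef(\O)$ is set up correctly and $\overline{\rho}$ is confirmed nonempty, the lemma is essentially the observation that $2$ admits only these partitions. The only place where one must be slightly careful is the (standard) nonemptiness of $\overline{\rho}$, which prevents the vacuous ``sum equals $0$'' case from appearing.
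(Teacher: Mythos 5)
Your proof is correct. The paper in fact offers no written proof for this lemma---it is stated as an immediate consequence of the definitions in \cref{ef-definition} and the preceding discussion---and your argument is exactly the elementary unwinding the author has in mind: $\O$ Pseudo Maximal means $\sum_{v\in\overline{\rho}} e_v f_v = 2$, the set $\overline{\rho}$ is nonempty by going-up applied to the integral extension $\O\hookrightarrow\OK$, and since each $e_v f_v\ge 1$ the equation leaves only the three partitions you list. Your explicit handling of nonemptiness (ruling out the vacuous sum) is a small but genuine point of care that the paper leaves unspoken.
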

\begin{theorem}[Pseudo Maximal Classification Theorem A]\hfill\\
    If $\{v,w\}\subseteq M(\K) \textit{ with } f_v=f_w=e_v=e_w=1,$ then the irreducible orders with conductor $v^a w^b$ are given by 
 
   \begin{itemize}
        \item if $a=b=r$ then there is a unique irreducible order $\O$ with conductor $(vw)^r$ given by $\Z+(vw)^r.$ This order will have index in $\OK$ given by $[\OK:\Z+(vw)^r]=p^r.$
        \item if $a\neq b$ then there is no irreducible (or otherwise) order with conductor $v^aw^b.$
    \end{itemize}
\end{theorem}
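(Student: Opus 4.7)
The plan is to reduce the problem to a local computation at $p$, the rational prime below $\rho$, and then apply \cref{Pseudomax-precursor}. First, any irreducible order $\O$ with conductor $v^aw^b$ must have $S(\O)=\{\rho\}$ with $\overline{\rho}=\{v,w\}$ by \cref{conductor}, since the radical of the conductor is the product of primes in $\overline{\rho}$. Localizing at $p$ and invoking the decomposition from \cref{SEPARATION LEMMA}, I can treat the $\rho$-part of $\O\otimes\Z_p$ separately; the other factors are already maximal by the irreducibility hypothesis $S(\O)=\{\rho\}$. Thus $\O$ is completely determined by the rank-$2$ sub-$\Z_p$-algebra $\O_\rho\otimes\Z_p \subseteq (\OK)_v\times(\OK)_w$, and since $e_v=f_v=e_w=f_w=1$, this ambient ring is $\Z_p\times\Z_p$ with $\Z_p$ embedded diagonally.

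Next, I apply \cref{Pseudomax-precursor} by writing $A=\Z_p\times\Z_p = \Z_p + \Z_p\cdot t$ where $t=(1,0)$ satisfies $t^2=t$, so $A$ genuinely has the form required by the lemma. The lemma then classifies the candidate sub-algebras as $R_k := \Z_p + p^k A$ for $k\ge 0$, with $k=0$ recovering $\OK$ locally and each $k\ge 1$ giving a proper order. This is the decisive reduction: the entire local problem is parametrized by a single non-negative integer $k$.

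The key calculation is then the conductor of $R_k$ in $A$. A direct check shows $R_k = \{(x,y)\in\Z_p\times\Z_p : x\equiv y\pmod{p^k}\}$, and the largest ideal of $A$ sitting inside $R_k$ is $p^k\Z_p \times p^k\Z_p$. Under the identification with the $v$ and $w$ local components, this globally corresponds to $v^k w^k = (vw)^k$. Consequently, if the conductor is to be $v^aw^b$, we are forced into $a=b=k$, which immediately rules out the case $a\neq b$ and pins down $k=r$ when $a=b=r$.

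For existence and uniqueness when $a=b=r$, I would verify that $\Z+(vw)^r$ is a ring (closed under multiplication since $((vw)^r)^2\subseteq (vw)^r$), and then check that its completion at $p$ agrees with $R_r$ on the $\{v,w\}$-piece and with $\OK$ on all other primes above $p$; this identifies it as the unique global lift of the local order $R_r$. The index assertion $[\OK:\Z+(vw)^r]=p^r$ then follows from $A/R_r \cong \Z_p/p^r\Z_p$ via projection onto either coordinate, combined with the fact that $\O$ and $\OK$ agree away from $p$. The main point requiring care is the bookkeeping around the full decomposition $\OK\otimes\Z_p = \prod_{\mathfrak{p}\mid p}(\OK)_\mathfrak{p}$, ensuring that primes above $p$ outside $\{v,w\}$ contribute nothing to either conductor or index; once that is in hand, the argument reduces cleanly to the linear algebra supplied by \cref{Pseudomax-precursor}.
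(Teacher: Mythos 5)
Your proof is correct and follows essentially the same route as the paper: reduce to the local problem in $\Z_p\oplus\Z_p$ with $\Z_p$ embedded diagonally, apply Lemma~\ref{Pseudomax-precursor} to classify the sub-$\Z_p$-algebras as $\Z_p+p^k(\Z_p\oplus\Z_p)$, and read off the conductor as $(vw)^k$. Your write-up simply fills in more of the bookkeeping (the explicit description of $R_k$ as congruent pairs, the conductor and index computations, and the handling of primes above $p$ outside $\{v,w\}$) that the paper leaves implicit.
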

\begin{proof}
    Clearly, we are looking for irreducible orders in $\Z_p\oplus\Z_p$ which is treated an extension of $\Z_p$ via diagonal embedding.
So, we may write it as $\Z_p+\Z_p t$ where $t=(0,1)$.
Thus every order here must be of the form $\Z_p+ p^k(\Z_p\oplus\Z_p).$ Coordinate wise $p=(p,p)$ due to diagonal embedding.
Thus, the sub-ring is given by $\Z_p + ((p,1)^k \cdot (1,p)^k)(\Z_p\oplus\Z_p)$.
Intersecting with $\OK$ we get that the corresponding order is given by $\Z+(vw)^k$ whose conductor is easily seen as $(vw)^k.$
\end{proof}
\begin{theorem}[Pseudo Maximal Classification Theorem B]\hfill\\
    If $\{v\}\subseteq M(\K) \textit{ with } f_v=2=2e_v,$ then there is a unique irreducible order with conductor $v^a$ which is given by $\Z+v^a$ with $[\OK:(\Z+v^a)]=p^a$ 
\end{theorem}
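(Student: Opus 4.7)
The plan is to reduce to a local computation at the rational prime $p$ lying below $v$, apply \cref{Pseudomax-precursor}, and then glue back to a global statement. Since $e_v = 1$ and $f_v = 2$, the $v$-adic completion $L := \widehat{(\OK)_v}$ is the unramified quadratic extension $\Z_{p^2}$, and $\OK \otimes_\Z \Z_p \cong L \times \prod_{v' \mid p,\, v' \neq v} L_{v'}$. For any irreducible order $\O$ with $S(\O) = \{\rho\}$, $\overline{\rho} = \{v\}$, and $c_\O = v^a$, the Chinese Remainder decomposition along primes of $\O$ above $p$ (as in the proof of \cref{SEPARATION LEMMA}) gives $\O \otimes \Z_p = M \times \prod L_{v'}$, where $M := \widehat{\O_\rho}$ is a rank-$2$ $\Z_p$-subalgebra of $L$ containing $\Z_p$. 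The $L$-conductor of $M$ equals the $v$-component of $c_\O$, which is $p^a L$ because $e_v = 1$.

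By \cref{Pseudomax-precursor}, $M = \Z_p + p^k L$ for some $k \ge 0$. Writing $L = \Z_p \oplus \Z_p \theta$ for a lift $\theta$ of a generator of $\F_{p^2}/\F_p$, we have $\Z_p + p^k L = \Z_p \oplus p^k \Z_p \theta$, and a direct check shows its $L$-conductor is exactly $p^k L$: the element $p^{k-1} \theta \in p^{k-1} L$ has $\theta$-coefficient of valuation $k - 1 < k$, so $p^{k-1} L \not\subseteq \Z_p + p^k L$. Hence $k = a$ and $[L : M] = p^a$.

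Now set $\O_0 := \Z + v^a$, a subring of $\OK$ since $v^a$ is an $\OK$-ideal. From $v^a \otimes \Z_p = p^a L \times \prod L_{v'}$ one obtains $\O_0 \otimes \Z_p = (\Z_p + p^a L) \times \prod L_{v'}$, and $\O_0 \otimes \Z_q = \OK \otimes \Z_q$ for all $q \neq p$. Hence $\O_0$ is an irreducible order with $S(\O_0) = \{\rho\}$, $\overline{\rho} = \{v\}$, conductor $c_{\O_0} = v^a$, and index $[\OK : \O_0] = p^a$. For uniqueness, any $\O$ with the prescribed invariants contains $\Z$ and $v^a = c_\O$, so $\O_0 \subseteq \O$; combined with $[\OK : \O] = p^a$ from the local analysis above, we conclude $\O = \O_0$.

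The main technical point is verifying that the global conductor of $\Z + v^a$ is exactly $v^a$, not some larger $\OK$-ideal $v^b$ with $b < a$. Locally this reduces to the $L$-conductor computation above, whose essence is a dimension mismatch: the image of $\Z$ in $v^{a-1}/v^a \cong \F_{p^2}$ is only the $1$-dimensional $\F_p$-subspace $p^{a-1}\Z/p^a\Z$, so $v^{a-1} \not\subseteq \Z + v^a$. Everything else is routine bookkeeping via the Separation Lemmas and the Fundamental Theorem of Orders.
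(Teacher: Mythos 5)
Your proof is correct and takes essentially the same route as the paper: reduce to the local problem at $p$ via tensoring with $\Z_p$, invoke \cref{Pseudomax-precursor} to conclude all rank-$2$ $\Z_p$-subalgebras of the unramified quadratic $\Z_p$-algebra have the form $\Z_p + p^k L$, and translate back to $\Z + v^a$. You supply more explicit bookkeeping than the paper does — the direct verification that the local conductor of $\Z_p + p^k L$ is exactly $p^k L$ and the containment argument for uniqueness — but these are just the details the paper leaves implicit behind ``easily seen.''
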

\begin{proof}
    Clearly, we are looking for irreducible orders in the ring of integers of the unique totally un-ramified extension of degree 2 over $\Z_p.$ Thus, $p$ may be seen as the uniformizer for this completion.
So, we may write it as $O_v:=\Z_p+\Z_p t$. Thus every order here must be of the form $\Z_p+ p^k(\O_v).$ Thus, the sub-ring is given by $\Z_p + p^k(O_v)$.
Intersecting with $\OK$ we get the order is given by $\Z+v^k$ whose conductor is easily seen as $v^k.$
\end{proof}
\begin{theorem}[Pseudo Maximal Classification Theorem C]\hfill\\
    If $\{v\}\subseteq M(\K) \textit{ with } 2f_v=2=e_v,$ then 
    \begin{itemize}
        \item if $a\ge 2$ and $a$ is even then there is a unique irreducible order with conductor $v^a$ which is given by $\Z+v^a$ with $[\OK:(\Z+v^a)]=p^{a/2}.$ 
        \item if $a\ge 1$ and $a$ is odd then there is no irreducible (or otherwise) order with conductor $v^a.$
    
\end{itemize}
\end{theorem}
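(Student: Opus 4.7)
The plan is to mimic the proofs of Theorems (A) and (B) by passing to the local completion at $v$, applying \cref{Pseudomax-precursor} to classify full-rank $\Z_p$-subalgebras, and then carefully computing the $v$-adic conductor of each such subalgebra. Since $v$ is the unique place above $p$ with $e_v=2$ and $f_v=1$, the completion $(\OK)_v$ is the ring of integers of a totally ramified degree-$2$ extension of $\Z_p$; let $\pi$ denote a uniformizer, so $\pi\OK_v = v\OK_v$ and $v_v(p)=2$. Writing $\OK_v = \Z_p + \Z_p\pi$, any full-rank $\Z_p$-subalgebra of $\OK_v$ containing $\Z_p$ is, by \cref{Pseudomax-precursor}, of the form $A_k := \Z_p + p^k\OK_v = \Z_p + \Z_p(p^k\pi)$ for some integer $k\ge 0$.

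The heart of the argument is to show that the conductor of $A_k$ inside $\OK_v$ equals $v^{2k}$ (equivalently $p^k\OK_v$). Containment $v^{2k}=p^k\OK_v\subseteq A_k$ is immediate from the definition, so the conductor is at least $v^{2k}$. For the reverse containment, I would show $\pi^{2k-1}\notin A_k$: a general element of $A_k$ has the form $a+p^k b$ with $a\in \Z_p$, $b\in \OK_v$, and its $v$-valuation is either $2v_p(a)$ (hence even) when $v_p(a)<k$, or $\ge 2k$ otherwise. Thus no element of $A_k$ has odd $v$-valuation strictly less than $2k$, so $v^{2k-1}\not\subseteq A_k$, and the conductor is exactly $v^{2k}$. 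In particular, the conductor must have even $v$-valuation, which immediately rules out $a$ odd, giving the second bullet.

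For the first bullet, with $a=2k\ge 2$, the correspondence of irreducible orders in $\K$ with $S(\O)=\{\rho\}$ and $\overline{\rho}=\{v\}$ against full-rank $\Z_p$-subalgebras of $\OK_v$ (the standard localization/completion correspondence underlying the Fundamental Theorem of Orders) shows there is a unique such order. Globally this order is recovered as $\Z+v^{2k}=\Z+v^a$: indeed, $\Z+v^a$ manifestly has conductor dividing $v^a$ and its $v$-adic completion is $A_k$, so both the conductor and the isomorphism type match. The index is computed from the $\Z_p$-basis $\{1,\,p^k\pi\}$ of $A_k$ inside the basis $\{1,\pi\}$ of $\OK_v$, yielding $[\OK_v:A_k]=p^k$, and since the two orders agree away from $v$, we get $[\OK:(\Z+v^a)]=p^{a/2}$.

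The main obstacle is the conductor computation, i.e.\ showing $v^{2k-1}\not\subseteq A_k$; everything else is a direct application of \cref{Pseudomax-precursor} and the localization dictionary already used for Theorems (A) and (B). Once that parity obstruction is established, both cases of the theorem follow immediately.
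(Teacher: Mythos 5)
Your proposal takes essentially the same route as the paper: complete at $v$, recognize $(\OK)_v$ as the integer ring of a totally ramified quadratic extension with uniformizer $\pi$ and $p=\pi^2$ (up to unit), invoke \cref{Pseudomax-precursor} to write every full-rank $\Z_p$-subalgebra as $\Z_p+p^k\OK_v$, and read off the conductor as $v^{2k}$. The paper asserts the conductor "is easily seen" to be $v^{2k}$; your parity argument (every element of $A_k$ has either even $v$-valuation or $v$-valuation $\ge 2k$, so $\pi^{2k-1}\notin A_k$) is exactly the verification the paper leaves implicit, and it correctly yields both bullets.
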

\begin{proof}
    We are looking for irreducible orders in the ring of integers of some totally ramified extension of degree 2 over $\Z_p.$ Thus, $p$ may be seen as the square of the uniformizer for this completion.
So, we may write it as $O_v:=\Z_p+\Z_p t$ where $t$ is the uniformizer (since it is totally ramified. Thus, every order here must be of the form $\Z_p+ p^k(\O_v).$ Thus, the sub-ring is given by $\Z_p + p^k(O_v)$. Intersecting with $\OK$ we get the order is given by $\Z+v^{2k}$ (as $(p)=v^2$, as it is a totally ramified extension) whose conductor is easily seen as $v^{2k}.$
\end{proof}

\begin{corollary}\label{uniquePseudoMax}
    Given a set of valuations $S\subseteq M(\K)$ such that $\sum_{v\in S}e_vf_v=2$ then given $r\ge 1$ there exists a unique irreducible order $\O$ with $S(\O)=\{\rho\}$ such that $\overline{\rho}=S$ and $[\OK:\O]=p^r.$ 
\end{corollary}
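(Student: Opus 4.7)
The plan is to deduce this corollary directly from the case analysis given by Theorems A, B, C together with the preceding lemma classifying the possible structures of $\overline{\rho}$ for a Pseudo Maximal Order. Since the hypothesis $\sum_{v \in S} e_v f_v = 2$ means exactly that any irreducible order $\O$ realizing $\overline{\rho} = S$ is a Pseudo Maximal Order (by definition of $ef(\O)$), the classification lemma forces $S$ to fall into exactly one of the three listed shapes, which correspond precisely to the hypotheses of Theorems A, B, C respectively.

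First I would separate the three cases. In Case 1 ($S = \{v, w\}$ with $f_v = f_w = e_v = e_w = 1$), Theorem A says the only orders with conductor supported on $S$ are $\Z + (vw)^r$, and these have index $p^r$; matching the desired $r$ pins down a unique such order. In Case 2 ($S = \{v\}$ with $f_v = 2$, $e_v = 1$), Theorem B gives a unique order $\Z + v^a$ for each $a$, with index $p^a$; setting $a = r$ gives the unique order with $[\OK : \O] = p^r$. In Case 3 ($S = \{v\}$ with $f_v = 1$, $e_v = 2$), Theorem C gives a unique order $\Z + v^{2k}$ of index $p^k$, so choosing $k = r$ yields the unique order of index $p^r$.

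What remains is to check that in each case the order so produced genuinely satisfies $S(\O) = \{\rho\}$ with $\overline{\rho} = S$, and that no irreducible order with the prescribed data is missed. Existence is immediate from the explicit constructions in Theorems A, B, C (each is built to have conductor supported exactly on $S$, hence $\overline{\rho}$ equals $S$ by Remark \ref{conductor}, and irreducibility is built into the construction since the local picture at the unique bad prime $\rho$ is handled while the other localizations remain maximal). For uniqueness, any other irreducible order with the same data would have the same conductor support, so would be one of the orders enumerated in the relevant theorem; matching the index $p^r$ then singles out a unique member of that family.

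I do not expect any serious obstacle here: the heavy lifting is done by Theorems A, B, C and by Lemma \ref{Pseudomax-precursor}, which already reduces the classification to the rank-$2$ $\Z_p$-subalgebra problem. The only mildly delicate point is confirming that the parity obstructions in Theorems A and C (the conditions $a = b$ in A and $a$ even in C) do not exclude any value of $r \geq 1$; this is immediate, since $r \mapsto (r,r)$ and $r \mapsto 2r$ hit the required conductor exponents for every $r \geq 1$, so existence is never blocked and uniqueness is preserved.
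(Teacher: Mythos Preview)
Your proposal is correct and matches the paper's approach: the paper states this as an immediate corollary of Theorems A, B, C (with no further proof given), and your case-by-case deduction from those three theorems is precisely what is intended.
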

\begin{definition}\label{Def:Sudo and ReSMO}
 
   A {\em Sudo Maximal Order} in $\K$ is an order $\O$ all of whose irreducible order components are Pseudo-maximal. In other words, $\O$ is Sudo Maximal if 
    \begin{align*}
        \forall \rho \in S(\O): ef(\rho)=2.
    \end{align*}
    
    A Restricted Sudo Maximal Order (or ReSMO) is a Sudo Maximal Order satisfying : for every prime $p\in \Z$ we have at most one prime ideal in $S(\O)$ which contains $p.$   
\end{definition}
\begin{remark}\label{truSudo}
The author wanted to define Pseudo Maximal to mean $ef(\O)/f(\O)\le 3$ and define Sudo maximal from there. This is more natural as we will show in an upcoming paper soon. Maybe the author should water down the next paper and only use this definition. Alas, we will instead focus on counting square-frees. 
\end{remark}
\begin{discussion}
    Note that, \cref{possiblelocalPseudo} and \cref{uniquePseudoMax}, the number of pseudo maximal orders with index $p^k$ in its ring of integers ($\OK$) is equal to the sum of three quantities, namely:
    \begin{enumerate}
        \item The number of primes $v\in M(\K)$ satisfying $e_v=2$, $f_v=1$ and $v\cap \Z=(p).$
        \item The number of pairs of distinct primes $v,w\in M(\K)$ satisfying $1=e_v=f_v=e_w=f_w$, $w\cap \Z=v\cap \Z=(p).$
        \item The number of primes $v\in M(\K)$ satisfying $e_v=1$, $f_v=2$ and $v\cap \Z=(p).$
  
  \end{enumerate}
    As a result, the following lemma follows.
\end{discussion}
\begin{lemma}
    Number of ReSMOs in a number-field $\K$ with index $m$ is bounded above by $\binom{n}{2}^{\omega(m)}$ where $n=[\K:\Q].$
\end{lemma}
\begin{remark}\label{actually1}
    In fact simple chebatorev computations (sizes of class) show that this number on an average will be 1. 
\end{remark}

\begin{theorem}\label{Number of ReSMOs}
    It follows that the number of ReSMOs in a given number field $\K$ with $[\K:\Q]=n$ with discriminant bounded by $X$ is  
    \[
    \ll_n \sqrt{\frac{X}{|\Zdisc(\K)|}}  \log(X)^{\binom{n}{2}-1}
    \]
\end{theorem}
\begin{proof}
    A ReSMO in a number field $\K$ has index $m$ if and only if it 
    has discriminant $m^2\Zdisc(\K)$. Applying the preceding lemma, we can conclude that the number of ReSMOs in a number-      field $\K$ is 
    \[
    \le \sum_{m\le \sqrt{\frac{X}{|\Zdisc(\K)|}}}\binom{n}{2}^{\omega(m)}\ll_n\sqrt{\frac{X}{|\Zdisc(\K)|}}  (\log X-\log|\Zdisc(\K)|)^{\binom{n}{2}-1}.
    \]
\end{proof}
\begin{remark}
    From \cref{actually1}, we see the real asymptotic will actually be $c_\K X$ for some constant dependent on the field. Furthermore, this can be applies to Sudo maximal and even the definition desired in \cref{truSudo}, which includes all orders with index that has powers of primes less than or equal to $3$, and can easily be extended a little bit further even.
\end{remark}
\section{Weakly Divisible rings from Ultra Weakly Divisible Forms yield ReSMOs}
This section explores ``Ultra Weakly divisible rings", which are any intermediate rings between a binary ring associated to binary forms which are not in the singular locus of the discriminant (as a polynomial in the coefficients of the form) for any prime $p\in \Z,$ and establishes our previous insight that such rings are in fact ReSMOs. We also show that all intermediate rings, including the ring of integers in such a case, are weakly divisible rings (see \cref{WeaklyDivisibleDefiningTheorem2}). This allows all such rings of integers to be represented by a well-behaved parameter space (see \cite{patil2024weaklydivisiblerings}).

\begin{definition}
    Recall that when $p\neq 2,$ and $f$ is a binary $n$-ic integral form Bhargava-Shankar-Wang define $f$ is strongly divisible by $p$ (in \cite{BSW-monicsquarefree}) if $p^2|\disc(f+p\cdot g)$ for any choice of binary $n$-ic form $g$.
\end{definition}

\begin{remark}
    In \cite{bhargava2022squarefree}, the authors define ``Weakly divisible" as not strongly divisible. We do not do that. Implicitly they use (show) that if $f$ is not strongly divisible by $p$ and $p^2|\disc(f)$ then $f$ has a double root in $\P^1(\F_p)$ and if this root is at $[a:b]\bmod p$ for some co-prime $a,b\in \Z$, then $p^2|f(a,b).$ The following theorem is implicit in making the above claim. We make this explicit, akin to the monic polynomial version of this theorem used in \cite{BSW-monicsquarefree}.
\end{remark}
\begin{theorem}
    A binary form $f$ is strongly divisible by $p$ if one of the 
    following holds:
    \begin{enumerate}
            \item $f\bmod p$ has a triple root in $\P(\F_p)$
                   
            \item $f\bmod p$ has two double roots in $\P(\overline{\F}_p).$
        \end{enumerate}
\end{theorem}
\begin{proof}
     We will assume $f$ is irreducible and $\K_f:=\Q[x]/(f),$ we will get rid of this assumption at the end of this proof.
    
    If $f(x,y)$ shows the following decomposition/factorization modulo $p,$    
    \[
    f(x,y)\equiv \prod_i (f_i)^{e_i} \bmod p,
    \]
    then we know from Theorem 1 in \cite{Corso2005DecompositionOP} that $pR_f$ must factorize as 
    \[
    \prod_{i=1}^k \q_i= pR_f
    \]
    where $q_i$ are primary ideals corresponding to distinct primes $\p_i$ in $R_f.$

    Now Corollary 1 in \cite{Corso2005DecompositionOP} also tells us that $\p_i$ is invertible if and only if $\p_i^{e_i}=\q_i.$
   
 \begin{itemize}
        \item Case 1: All $\p_i$ are invertible.
In this case, note that none of $\p_i$ contain the conductor of $R_f$, $c_{R_f}.$ Thus, $c_{R_f}$ is co-prime to all of $p_i$ which further implies $p$ is co-prime to $c_{R_f}$ and thus co-prime to $[\O_{\K_f}:R_f]$ since it divides $\vert\OK/(\Z+c_{R_f})\vert$ which in turn divides  $\vert\OK/c_{R_f}\vert=\nm_{\OK}(c_{R_f})$.
In this case we can use Theorem 3 from \cite{Corso2005DecompositionOP} to claim that the ramification index of $p_i\O_{\K_f}$ is $e_i$ and its inertial degree is $deg(f_i).$ Then, by standard algebraic number theory $p^{\sum_{i=1}^{k}deg(f_i)(e_i-1)}$ must divide the discriminant of $\O_{\K_f}$ and thus $R_f$ and thus $\disc(f).$ Thus if some $e_i\ge 3$ or $e_i=2$ and $f_i\ge 2$ then $p^2|\disc(f).$
        \item Case 2: Some $\p$ over $pR_f$ is not invertible.
Thus, $\p$ contains the conductor of $R_f$ or  $\p\in S(\R_f)$ which means that $p$ divides the index of the localization in its integral closure which is same as index of the irreducible component corresponding to $\p$ in $\O_{\K_f}.$ Since, $p|[\O_{\K_f}:R_f]$, we get  $p^2|\Zdisc(R_f)=\disc(f).$
    \end{itemize}  
    
    Thus, if $f$ is irreducible primitive binary $n$-ic form, then $p^2|\disc(f+pg)$ for any choice of $g$ provided that for some $i,$ $deg(f_i)\ge 2 \textit{ and } e_i\ge 2$ and $f+pg$ is irreducible.
Similarly, if $f$ is irreducible primitive binary $n$-ic form, then $p^2|\disc(f+pg)$ for any choice of $g$ provided that for some $i,$ $deg(f_i)\ge 1 \And e_i\ge 3$ and $f+pg$ is irreducible.
To get rid of the irreducible condition we will use Hilbert irreducibility theorem.
Note that $\disc(f+p^2 g)\equiv \disc(f) \bmod p^2.$ Furthermore, note that the number of reducible forms is less $o(1)$ (Hilbert irreducibility) of all forms and thus, one can always perturb a binary form by some form of the type $p^2g$ to make it irreducible.
If this were not so, then a proportion of all polynomials would in fact be $\gg\frac{1}{p^{2n}}$, as all polynomials congruent to our base polynomial modulo $p^2$ would be reducible.
\end{proof}
    \begin{lemma}\label{nodividing a_n-2 and Delta_1}
        If $f(x,y)=a_0x^n+a_1x^{n-1}y+\cdots + a_{n-1}xy^{n-1} +a_ny^n$, $p\neq 2$, $p^2|\disc(f)$, $p|a_n$ $p|a_{n-1}$ and $f \bmod p$ has neither a triple root nor two double roots, then $p\nmid a_{n-3}$ and $p\nmid \disc((f-a_ny^2-a_{n-1}xy^{n-1})/x^2).$
    \end{lemma}
    \begin{proof}
    Referring to Lemma 3 (Disc structure final two) in \cite{patilEkedahlSingularDiscriminant}, we get 
    \[
    \disc(f) \equiv 4 a_n\cdot a_{n-2}^{3}\cdot \disc(\frac{f-a_{n-1}xy^{n-1} -a_ny^n}{x^2}) \bmod p^2.
    \]
    Note if $p|a_{n-2}$ then $f\bmod p$ has a triple root at $0.$ Thus, $p\nmid a_{n-2}$.
Similarly if $p|\disc(\frac{f-a_{n-1}xy^{n-1} -a_ny^n}{x^2})$, then $\frac{f-a_{n-1}xy^{n-1} -a_ny^n}{x^2}\bmod p$ has a double root, and thus $f\bmod p$ must have at least two double roots, namely $0$ and then the double root of $\frac{f-a_{n-1}xy^{n-1} -a_ny^n}{x^2}\bmod p$.
Thus, $p\nmid\disc(\frac{f-a_{n-1}xy^{n-1} -a_ny^n}{x^2}).$
    \end{proof} 
\begin{theorem}\label{Fundamental theorem of rings and polynomials}
    If $f$ is primitive, $p\neq 2$ and $p^2|\disc(f)$ and $f \bmod p$ has neither a triple root nor two double roots then it has a single double root at some point in  $\P(\F_p)$ such that if the double root is at $[a:b]$ where $a,b\in \Z$  and $\gcd(a,b)=1$ then $p^2|f(a,b)$ (and thus, $f$ is not strongly divisible by $p$).
\end{theorem}
\begin{proof}
    If $p^2|\disc(f),$ then $p|\disc(f).$ Since, $\disc(f\bmod p)=(\disc(f)\bmod p)$, $f$ must have double root in $\P(\overline{\F}_p).$ Since, $f$ has neither a triple root nor two double roots in $\P(\overline{\F}_p),$ then $f$ must have a double root in $\P(\F_p).$ If the root is at $[a:b]\in \P(\Z)$ with $\gcd(a,b)=1$ then we can move the root to $0$ by some transformation in $SL_2(\Z)$.
Thus, if 
    \[
    f(x,y)=a_0x^n+a_1x^{n-1}y+\cdots + a_{n-1}xy^{n-1} +a_ny^n
    \]
    then $p|a_{n-1}$ and $p|a_n.$ 
    Referring to Lemma 3 (Disc structure final two) in \cite{patilEkedahlSingularDiscriminant}, we get 
    \[
    \disc(f) \equiv 4 a_n\cdot a_{n-2}^{3}\cdot \disc(\frac{f-a_{n-1}xy^{n-1} -a_ny^n}{x^2}) \bmod p^2.
\]
    \cref{nodividing a_n-2 and Delta_1} gives $p\nmid a_{n-2}$ and $p\nmid \Delta_1.$
    
    It follows that $p^2|\disc(f)\implies p^2|a_n$.
\end{proof}

\begin{definition}\label{UltraWeaklyDivisible}
    We say a  $f$ ultra weakly divisible if $2\nmid\disc(f)$ and for all odd primes $p$
    \[
    p^2|\disc(f) \Rightarrow f \textit{ is not strongly divisible by } p.
\] 
\end{definition}
\begin{theorem}\label{LRGST M}
    If $f$ is ultra-weakly divisible, then there exist a maximal $m_f$ such that $f$ is weakly divisible by $m_f$ (at some $l_f\in \P(\sfrac{\Z}{m\Z})$) and $R'_{(f,m_f,l_f)}$ is maximal i.e. $R'_{(f,m_f,l_f)}$ is THE ring of integers for $\K_f\simeq \Q[x]/(f).$ In fact, if $\disc(f)=st^2$ where $s$ is squarefree, then $m_f=t.$
\end{theorem}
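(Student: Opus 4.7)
The plan is to localize at every prime whose square divides $\disc(f)$ via the Fundamental Theorem of Orders (\cref{FTO}), show that $R_f$ is a Sudo Maximal Order, and then invoke the Pseudo Maximal Order structure theorems (Theorems A, B, C) to pin down the unique enlargement up to $\OK$.

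First I would write $\disc(f) = s t^2$ with $s$ squarefree. For primes $p \nmid t$, we have $p^2 \nmid \disc(f)$, so $R_f$ is already maximal at $p$ by \cref{Fundamental theorem of rings and polynomials}. For each prime $p \mid t$, ultra-weak divisibility of $f$ combined with the same theorem forces $f$ to be weakly divisible by $p$: $f \bmod p$ has a unique double root in $\P^1(\overline{\F}_p)$, the root is linear, it lies at some $\ell_p \in \F_p$ (not at infinity), and $p^2 \mid f(\ell_p)$, with no triple linear roots or nonlinear double roots appearing.

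Next I would apply the Dedekind--Kummer theorem for binary rings (\cref{DedekindKummerBinary}) at each such $p$: the factorization $f \equiv (x - \ell_p y)^2 g(x,y) \pmod p$ with $\overline{g}$ coprime to $(x - \ell_p y)$ decomposes $R_f \otimes \Z_p$ as a product of local rings, and Dedekind's Criterion for binary rings (\cref{DCBinary}) shows that all local factors are DVRs except the one coming from $(x - \ell_p y)^2$. This exceptional irreducible component has $ef = 2$, so it is a Pseudo Maximal Order. Consequently $R_f$ is a Sudo Maximal Order whose irreducible non-maximal pieces are in bijection with $\{p : p \mid t\}$, and Theorems A, B, C describe each local enlargement inside $\OK$ uniquely.

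Finally, I would assemble these enlargements globally: set $\ell_f = (\ell_p)_{p \mid t}$ and $m_f = t$, so that by the Separation Lemmas (\cref{SEPARATION LEMMA}, \cref{SEPARATION LEMMA 2}) the ring $R'_{(f,t,\ell_f)}$ agrees with $\OK$ at every prime and hence equals $\OK$ globally. For the upper bound $m_f \mid t$, I would combine the index--discriminant identity $\disc(f) = [\OK : R_f]^2 \disc(\K)$ with the multiplicative decomposition of $[\OK : R_f]$ across the Pseudo Maximal components given by \cref{SEPARATION LEMMA}; for the matching lower bound, the local enlargement index at each $p \mid t$ pins down $v_p(m_f) = v_p(t)$ via Theorems A, B, C. The hard part will be this last identification: carefully matching the definition of weak divisibility of $f$ by $p^k$ (from \cite{bhargava2022squarefree, BSW-monicsquarefree}) with the $p$-adic valuation of the Pseudo Maximal enlargement index from Theorems A, B, C, and verifying prime by prime that the two exponents agree.
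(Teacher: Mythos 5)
Your strategy is genuinely different from the paper's, but it has a concrete gap at exactly the point you flag as ``the hard part.'' The paper proves \cref{LRGST M} directly: it applies \cref{WeaklyDivisibleStructureLemma} to write $\disc(f)=f(l)\Delta_1+f'(l)^2\Delta_2+f(l)f'(l)\Delta_3$, observes that the ultra-weakly-divisible hypothesis forces $p\nmid\Delta_1$ and $p\nmid f^{(2)}(l)/2!$ for any $l\equiv l_0\bmod p$, uses Hensel's lemma to lift the double root $l_0$ to $l_k$ with $p^k\mid f'(l_k)$, and then argues by contradiction (via $p^D\,\|\,\disc(f)$) that in fact $p^{2\lfloor D/2\rfloor}\mid f(l_k)$, so $f$ is weakly divisible by $p^{\lfloor D/2\rfloor}$. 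The resulting ring $R'_{(f,m_f,l_f)}$ then has discriminant $\disc(f)/m_f^2=s$, squarefree, hence maximal. No appeal to FTO, Sudo Maximal structure, or Theorems A, B, C is made.

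Your route --- showing $R_f$ is Restricted Sudo Maximal and deducing from Theorems A, B, C that there is a unique enlargement to $\OK$ --- correctly pins down $\OK$ abstractly, but it does not actually establish that $f$ is weakly divisible by $p^{\lfloor D_p/2\rfloor}$ for each $p\mid t$, and this is a necessary ingredient: without it $R'_{(f,m_f,l_f)}$ is not even defined. Knowing that a unique Pseudo Maximal enlargement of the right index exists at each $p$ (from Theorems A, B, C) tells you nothing, by itself, about whether $f$ admits the specific algebraic congruences $t^2\mid f(l)$, $t\mid f'(l)$. Ultra-weak divisibility only gives weak divisibility by the first power of $p$; lifting to $p^{\lfloor D_p/2\rfloor}$ requires the Hensel step and the discriminant structure formula, which your proposal omits entirely and which is the real content of the theorem. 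Your argument for the lower bound $v_p(m_f)=v_p(t)$ is circular in this respect: it presumes the existence of $m_f$ and $l_f$ that you have not constructed. If you supplement your argument with the Hensel-plus-\cref{WeaklyDivisibleStructureLemma} computation to establish weak divisibility by $t$, the remainder of your approach does work (indeed the squarefree-discriminant conclusion then makes the appeal to Theorems A, B, C unnecessary), but as written the proposal is incomplete.
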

\begin{proof}
Let $p$ denote a prime such that $p^{2}|\disc(f).$ Thus, $\disc(f)\bmod p=0$ and thus, $f$ has a linear double root.
\cref{Fundamental theorem of rings and polynomials} tells us that there is exactly one root of multiplicity exactly two and all other roots are of multiplicity $1$.
Furthermore, WLOG we may move this root to $0\bmod p$ by transforming the polynomial with some element in $SL_2(\Z),$ and thus \cref{Fundamental theorem of rings and polynomials} again tells us that the polynomial may be treated as 
\[
    f(x,y)=a_0x^n+a_1x^{n-1}y+\cdots + a_{n-1}xy^{n-1} +a_ny^n
\]
where $p|a_{n-1}$ and $p^2|a_n$.
Using Lemma 3 (Disc structure final two) in \cite{patilEkedahlSingularDiscriminant}, we see that,
\[
\disc(f)=a_n\cdot \Delta_1 +a_{n-1}^2 \cdot \Delta_2 +a_na_{n-1}\cdot \Delta_3 +a_n^2\Delta_4
\] 
where $\Delta_1 = 4a_{n-2}^3 \disc(\frac{f-a_{n-1}xy^{n-1} -a_ny^n}{x^2}).$ From \cref{nodividing a_n-2 and Delta_1} we also know that $p\nmid \Delta_1.$

if we let $F(x)=f(x,1)$ and $g(x)=f'(x),$ then $g(0)=a_{n-1}\equiv 0\bmod p$ and $p\nmid 2a_{n-2}=g'(0).$ Thus, Hensel's lemma tells us that for each $k$, one can find a lift of $0$ to some $l_k$ $\sfrac{\Z}{p^k\Z}$ such that $p^k|g(l_k)$.
Fix $k$ to be very large, say $v_p(\disc(f))+1$. If 
\[
\begin{bmatrix}1&l_k\\0&1\end{bmatrix}\circ f=A_0x^n+A_1x^{n-1}y+\cdots + A_{n-1}xy^{n-1} +A_ny^n
\]
then $p^k|A_{n-1},$ since $A_n\equiv a_n\bmod p$ (as $l_k\equiv 0\bmod p),$ we get $p|A_n.$ Also $\Delta_1(A_0,A_1,\cdots, A_{n-2})\equiv \Delta_1 \bmod p$ for the same reason and hence $p\nmid \Delta_1(A_0,A_1,\cdots, A_{n-2}).$ Since the discriminant is invariant under the above transformation(and Lemma 3 (Disc structure final two) in \cite{patilEkedahlSingularDiscriminant}, we get 
\[
\disc(f)\equiv A_n(\Delta_1(A_0,A_1,\cdots, A_{n-2}) +A_{n}C_k)\bmod p^{k}
\]
Since, $p\nmid (\Delta_1(A_0,A_1,\cdots, A_{n-2} +A_{n}C_k)$, it follows that $p^{k-1}|A_n$. More specifically $f$ is weakly divisible by $p^{[\frac{D}{2}]}$ at $0.$ Thus, by Chinese Remainder Theorem, we get $f$ is weakly divisible by $m_f$ (where $\disc(f)=sm_f^2$, and $s$ is squarefree) at some $l_f\in \P(\sfrac{\Z}{m_f\Z})$ where 
$\disc(f) =s (m_f)^2$ and $s$ is squarefree. This tells us that $\Zdisc(R'_{(f,m_f,l_f)})=s$. This makes $R'_{(f,m_f,l_f)}$ the ring of integers of $\K_f$ as its discriminant is squarefree (See \cref{WeaklyDivisibleDefiningTheorem2}).
\end{proof}

\begin{theorem}\label{UWDisReSMO}
    The binary ring associated to a ultra weakly divisible irreducible binary form of degree $n$ is a ReSMO, and so are any intermediate rings that contain the binary ring and are contained in the ring of integers.
\end{theorem}
\begin{proof}
    Let $F$ denote an irreducible binary form, such that $F=\prod F_{i}^{e_i}\bmod p$, where $F_i$ are irreducible modulo $p$ then Theorem 1 and Proposition 5 from \cite{Corso2005DecompositionOP} tell us that the primary decomposition of $pR_F$ given is given by $pR_F=\cap \q_i=\prod\q_i$ for the same indexing set, such that $\nm(\q_i)=p^{e_i\deg(F_i)}.$ Let $\rho_i$ denote the radical of $\q_i.$ Let $\K$ denote a number-field generated by $F.$

    Corollary 1 in \cite{Corso2005DecompositionOP} tells us that if $e_i=1$ then $\rho_i\notin S(R_F).$ 

    Note that when $F$ is Ultra Weakly Divisible, we know that $e_i>1 \implies e_i=2$ 
    and $f_i=1.$ Thus, if $e_i=2$ and $f_i=1$ for some prime ideal $\rho_i$.
    Thus, for all $\rho\in S(R_F)$ we have $ef(\rho)=2$ and $f(\rho)=1.$ Thus, $R_F$ is Sudo maximal.
    Furthermore, Ultra Weakly Divisible also implies at most one factor with $e_i>1$ for any given prime $p\in M(\Z),$ making $R_F$ a ReSMO.
    Furthermore, if $R_F\subseteq H\subseteq \OK,$ where $H$ is an intermediate order, then $\rho\in M(R_F)\backslash S(R_F) \implies (R_F)_\rho$ is a DVR and hence equal to $(\OK)_v$ for the unique prime ideal $v\in M(\OK)$ containing $\rho$ and thus, there is a unique prime in $M(H)$ over $\rho$, which is outside $S(H).$ 

    For $\rho \in S(R_F)$, \cref{Normefrelation} tells us 
    \[
    2=ef(\rho)=\sum_{\substack{\delta\in M(H)\\\delta\cap R_F=\rho}} ef(\delta).
    \]
    Thus, at most one $\delta\in M(H)$ over $\rho$ can have $ef(\delta)=2$, and hence $H$ is a ReSMO.  
\end{proof}

\section{Defining the polynomial, $\pounds_n$ and correspondence to count squarefree values}

\begin{notation}
   Let  $T_n:=\Z[a_0,a_1,\cdots,a_{n-2}, b_1, b_2,m]$.
\begin{definition}\label{defining pounds}
       We define $\pounds_n\in T_n$ by    
        \[
        \pounds_n(a_0,a_1,\cdots,a_{n-2},b_1,b_2,m):=\frac{\disc(a_0x^n+a_1x^{n-1}y+\cdots+a_{n-2}x^2y^{n-2}+mb_1x^{n-1}y+m^2b_ny^n)}{m^2}.
\]
        We will refer to $a_0x^n+a_1x^{n-1}y+\cdots+a_{n-2}x^2y^{n-2}+mb_1x^{n-1}y+m^2b_ny^n$ as the binary $n$-ic form associated to the point $(a_0,a_1,\cdots,a_{n-2},b_1,b_2,m)$.
\end{definition}

\end{notation}
\begin{lemma}
    It follows from Lemma 3 (Disc structure final two) in \cite{patilEkedahlSingularDiscriminant} that $\pounds_n$ is polynomial with integer co-efficient in the variables, $a_0,a_1,\cdots,a_{n-2},b_1,b_2,m.$
\end{lemma}
Note that $\pounds_n$ is a discriminant polynomial for the weakly divisible parameter space.
Similar to what the discriminant polynomial is to the parameter space of binary forms. 
\begin{theorem}\label{symmetriesofpounds}(Symmetries of $\pounds_n$).
The polynomial $\pounds_n(*,m)$ is invariant under the action of $G_m\subseteq SL_2(\Z)$ where $G_m$ is defined by elements of $g\in SL_2(\Z)$ satisfying $g\equiv id \bmod m.$
\end{theorem}
\begin{proof}
    Follows from definition of $\pounds_n$ via discriminant polynomial and how $GL_2$ action affects the discriminant.
\end{proof}
The following is another version of \cref{LRGST M}.
\begin{theorem}
    If $f$, a binary $n$-ic form, is ultra weakly divisible, then there exists unique $m_f$ and $l_f\in \P(\Z/m_f\Z)$ such that $f(L_f(x,y)) = a_0x^n+a_1x^{n-1}y+\cdots+a_{n-2}x^2y^{n-2} + m_fb_1xy^{n-1} +m_f^2b_2y^n$  such that $\pounds_n(a_0,a_1,\cdots,a_{n-2},b_1,b_2,m)$ is squarefree, where $L_f$ is any transformation in $GL_2(\Z)$ that is a lift of the transformation taking $[1:0]\in \P(\Z/m_f\Z)$ to $l_f\in \P(\Z/m_f\Z)$.
\end{theorem}
\begin{proof}
    Follows from \cref{discriminint of R'} and \cref{defining pounds} and \cref{LRGST M}.
\end{proof}
\begin{discussion}
    Thus, distinct $GL_2(\Z)$ orbits of ultra-weakly divisible polynomials correspond to distinct points (up to \cref{symmetriesofpounds}) at which $\pounds_n$ takes squarefree values. And ultra-weakly divisible polynomials are polynomials that do not, upon reduction modulo any $p$, lie in the singular locus of the discriminant. This will allow us to count squarefree values of $\pounds_n$ as Ekedahl sieve-able elements in the space of forms. This should hold for all multivariate polynomials, though the relative distributions will determine if this strategy will work for counting squarefree values of other forms that one might be able to generate from a given form.
\end{discussion}
\begin{remark}
    The converse of this theorem is almost true, more precisely, if $\pounds_n$ is squarefree then corresponding polynomial $f$ is not strongly divisible at all primes not dividing $m$.
    Some similarly restrictive is true for primes $p$ dividing $m$, though we need to take in an extremely small sliver of polynomials that are restrictively strongly divisible at primes divisible by $m$ such polynomials which also give squarefree values at $\pounds.$ These are the polynomials missed by our approach but exponent wise, these are inconsequential. They will have an effect on the actual constant of proportionality if someone cares about that. 
\end{remark}
\subsection{Local densities of Ultra Weakly Divisible Forms}
We now compute the number of binary forms $\bmod p$ such that they are not strongly divisible.
From \cref{Fundamental theorem of rings and polynomials}, we know that we wish to count binary forms with a linear double root and no other double roots.
We will then add the condition of $p\nmid (a_0,a_1).$

\begin{theorem}\label{NSD local density}
    The number of binary forms of degree $n\ge 4$ that are not strongly divisible $\bmod p$ (not in the singular locus of the discriminant or has no triple linear factor or two linear factors in $\overline{\F}_p$) is 
    \[
    p^{n+1} \left(1-\frac{1}{p^2}\right)^2.
\]
\end{theorem}
\begin{proof}
    We will divide this set into 3 subsets:
    \begin{itemize}
        \item The first set consists of $f$ such that $p\nmid \disc(f)$ and $p\nmid a_0$.
        We know from \cite{SQFREE-Prob} that the probability of $p\nmid \disc(f)$ for $f$ monic of degree $\ge 2$ is $1-p^{-1}.$  Binary forms in this set are just homogenizations of polynomials of the form $cg$ with $\disc(g)\neq 0$ where $c\in \F_p^*$.
        Thus the number of binary forms in this set is
        \[
        p^{n+1}(1-p^{-1})^2.
        \]
        \item The second set consists of $f$ such that $p\nmid \disc(f)$ and $p|a_0$.
        If $p|a_0$ and $p|a_1$, then clearly $f$ has a double root in $\P(\F_p)$.
        Thus we restrict ourselves to the case $p\nmid a_1.$ Since $\disc(f)=a_1^2\disc(f/y)$, it follows that we are again looking for elements that are $y$-times the homogenization of a polynomial of the form $cg$ where $g$ is a monic polynomial of degree $n-1$  with $\disc(g)\neq 0$ and $c\in \F_p^*.$ Thus, the number of such binary forms is 
        \[
        p^{n+1}(1-p^{-1})^2(1/p).
        \]
        \item The third set consists of $f$ such that $p|\disc(f)$.
        Suppose it has a double root at $0$; then we wish to compute binary forms $h(x)x^2$, such that $h(x)$ has no other double root or $p\nmid \disc(h)$ and $p\nmid h(0).$ Thus, these are $x^2$ times the palindromic reciprocal of a homogenization of a polynomial of the type $cg$, where $g$ is a monic polynomial of degree $n-2$ with $\disc(g)\neq 0$ and $c\in \F_p^*.$ Thus, as long as $n-2\ge 2$, the number of such polynomials is
        \[
        p^{n+1}(1-p^{-1})^2 (1/p^2).
        \]
        Since the polynomials that have a double root at $l$ and are not strongly divisible by $p$ are in bijection with this set and have no overlap (overlap would imply two double roots and thus strong divisibility), the number of elements in this set is
        \[
        p^{n+1}(1-p^{-1})^2 (1/p^2)(p+1).
        \]
    \end{itemize}
    Summing over them gives the result.
\end{proof}
\begin{theorem}\label{NSD with coprime}
    The number of binary forms of degree $n\ge 4$ that are not strongly divisible $\bmod p$ (not in the singular locus of the discriminant or has no triple linear factor or two linear factors in $\overline{\F}_p$) such that $a_0\neq0$ or $a_1\neq0$ is 
    \[
    p^{n+1} \left(1-\frac{3}{p^2}+\frac{1}{p^3}+\frac{1}{p^4}\right).
\]
\end{theorem}
\begin{proof}
    We instead compute the number of binary forms which are not strongly divisible by $p$ and $a_0=0$, $a_1=0.$ These are binary forms of the form $y^2 h$ where $h$ is a homogenization of $cg$ where $g$ is a monic polynomial with $\disc(g)\neq 0$ and $c\in \F_p^*.$ The result follows.
\end{proof}
By exactly the same argument as above, we have the following:
\begin{lemma}\label{p|m}
    The number of binary forms weakly divisible by $p$ at $l$ and not strongly divisible by $p$ is
    \[
    (1-p^{-1})^2p^{-3}
    \]
\end{lemma}\subsection{Count.}
Suppose $m$ is squarefree.
We refer to elements in $\P(\Z/m\Z)$ by $(m_1,l,m_2)$ as explained in \cref{P(Z/mZ)representation}.
For such an element, the number of binary forms $f=a_0x^n+a_1x^{n-1}y+\cdots+y^na_n$ such that $f$ is ultra weakly divisible for primes less than $M$ is bounded by 
\[
\textit{number of solutions }\bmod m^2\prod_{\substack{p\nmid m\\p<M}} p \times \textit{number of boxes of side length } m^2\prod_{\substack{p\nmid m\\p<M}} p 
\]
By the Chinese Remainder Theorem, the number of solutions can be divided into local factors.
We will further restrict ourselves to only those forms modulo 2 which are irreducible mod $2$.
Thus, we are counting the number of binary forms which are not strongly divisible by any prime $p$.
First, we count $1$-reduced forms, as they correspond to distinct binary rings and thus distinct $GL_2(\Z)$ orbits (see the theorems reviewed in the next section, \cref{reducedregion}).
Thus, the number of binary forms in a box given by $c_is\le|a_i|\le d_i s$ that are weakly divisible by $(m_1,l,m_2)$, irreducible mod 2, and not strongly divisible for all $p<M, p\nmid m$ is 
\[
\prod_{p|m}(1-p^{-1})^2p^{-3} \prod_{\substack{p<M\\p\nmid m}}(1-3p^{-2}+p^{-3}+p^{-4})\prod_{i=0}^n (d_i-c_i) s^{n+1} +O_n\left(\sum_{r=1}(P_M m^2)^r s^{n+1-r}\right).
\]
Combined with Theorem 1(Disc Ekedahl Modular) in \cite{patilEkedahlSingularDiscriminant} and Theorem 4 in \cite{patilEkedahlSingularDiscriminant}, we arrive at the following proposition for the count.

\begin{proposition}\label{prop:sieve_bound}
The number of binary forms of degree $n$ which are weakly divisible by $m$ at $(m_1,l,m_2)$ and ultra weakly divisible such that $s\ge m^2$ is bounded by $\mathcal{M} + \mathcal{T} + \mathcal{E}$, where the main term $\mathcal{M}$ is
\[
    \mathcal{M} = \prod_{p|m}(1-p^{-1})^2p^{-3} \prod_{\substack{p<M\\p\nmid m}}(1-3p^{-2}+p^{-3}+p^{-4})\prod_{i=0}^n (d_i-c_i)s^{n+1},
\]
the tail-end estimate $\mathcal{T}$ is bounded by:
\[
    \mathcal{T} \ll_n \prod_{p|m}(1-p^{-1})^2 s^{n-1} \cdot \frac{s^2}{m^3}\left(\frac{\log s}{(s/m^2)(\log(s/m^2)+\log \log s)} + \frac{1}{M\log M}\right),
\]
and the boxing error $\mathcal{E}$ is bounded by:
\[
    \mathcal{E} \ll_n \sum_{r=1}(P_M m^2)^r s^{n+1-r}.
\]
\end{proposition}

We sum over all values of $(m_1,l,m_2)$ to get the following theorem for $n\ge 5$. For $n=4$ we cannot guarantee a lack of double counting for all $(m_1,l,m_2)$; however, we can guarantee it for $(1,l,m).$
\begin{theorem}\label{nge5}
    The number of ultra-weakly divisible reduced $n$-ic binary forms with $|a_i|\le s$ for $0\le i\le n$ and $\alpha s\le a_0$ that are weakly divisible by $m$ (for squarefree $m$) is 
    \[
    \gg_n \frac{s^{n+1}}{m^2}\prod_{p|m}(1-p^{-1})
    \] 
    provided that $\frac{s}{m^2}\gg_n\frac{\log s}{\log \log s}$, $n\ge 5$ and $s\gg 1$.
\end{theorem}

\begin{theorem}\label{nge4}
    The number of ultra-weakly divisible reduced quartic forms with $|a_0|,|a_1|,|a_2|,|a_3|,|a_4|\le s$(and $a_0\ge \frac{s}{n}$ that are weakly divisible by $m$ (for squarefree $m$) is 
    \[
    \gg_n \frac{s^{5}}{m^2}(\prod_{p|m}(1-p^{-1}))^2
    \] 
    provided that $\frac{s}{m^2}\gg_n\frac{\log s}{\log \log s}$, and $s\gg 1$.
\end{theorem}
The condition $a_0\ge \alpha s$ allows us to say that there are at most $O_\alpha(1)$ elements in this set that correspond to the same translation orbit. This will allow us to move the above lower bound from forms to $GL_2(\Z)$ orbits, as every such orbit has at 
most one translation orbit of reduced form from \cref{distinct-rings-old} applied to $m=1$ and the fact that $R_f$ is invariant under $GL_2(\Z)$ on binary forms.

Since,$GL_2(\Z)$ ultra weakly divisible orbits correspond to distinct points for which $\pounds_n$ takes a squarefree value that is less than $s^{2n-2}/m^2,$ we set $X=s^{2n-2}/m^2$.
The condition $\frac{s}{m^2}\gg_n\frac{\log s}{\log \log s}$ becomes $\frac{(m^2X)^{1/(2n-2)}}{m^2}\gg \frac{\log X}{\log\log X}.$ Simplifying, we get
\[
(X^{\frac{1}{2n-2}}\frac{\log \log X}{\log X})^{\frac{n-1}{2n-3}}\gg_n m.
\]
Thus, we can say the following.
\begin{theorem}
    The number of ultra-weakly divisible $GL_2(\Z)$ orbits that intersect the space defined by $|a_i|\le (m^2X)^{1/(2n-2)}$ for all $0\le i \le n,$ that are weakly divisible by $m$ (for squarefree $m$) is 
    \[
    \gg_n \frac{(m^2X)^{\frac{1}{2}+\frac{1}{n-1}}}{m^2\log m}
    \] 
    provided that 
    \[
    X^{\frac{1}{4n-6}}(\frac{\log\log X}{\log X})^{\frac{n-1}{2n-3}}\gg_n m
    \]
    and $n\ge 5.$
\end{theorem}
For the following theorem, it is better to restrict $m$ to the set of primes (the most 
ideal set to restrict to would be squarefree numbers without too many prime factors).
Secondly, we will only sum over primes in $(CX^{\frac{1}{4n-6}}(\frac{\log\log X}{\log X})^{\frac{n-1}{2n-3}}, DX^{\frac{1}{4n-6}}(\frac{\log\log X}{\log X})^{\frac{n-1}{2n-3}})$ for some $C>D$.
\[
\#\{(f,m): f\textit{ is $1$-reduced}, f \textit{ is weakly divisible at m}, \frac{\disc(f)}{m^2} <X\}\gg \frac{X^{\frac{1}{2}+\frac{1}{n-1.5}}(\log \log X)^{\frac{1}{n-1.5}}}{(\log X)^{1+\frac{1}{n-1.5}}}.
\]

Now given a ultra-weakly divisible reduced $1$-form $f$, the number of times  $f$ is counted above is at-most $\omega(m_f).$ However we can do better as we have only sum over certain primes which are actually in $(CX^{\frac{1}{4n-6}}(\frac{\log\log X}{\log X})^{\frac{n-1}{2n-3}}, DX^{\frac{1}{4n-6}}(\frac{\log\log X}{\log X})^{\frac{n-1}{2n-3}})$.
Thus, $f$ cannot be counted more than $4n-3$ times, as this would result in the discriminant of $f$ to be too big.
Thus, the number of distinct ultra weakly divisible orbits which contribute to a squarefree value of $\pounds_n$ that is $\le X$ is
\[
\gg\frac{X^{\frac{1}{2}+\frac{1}{n-1.5}}(\log \log X)^{\frac{1}{n-1.5}}}{(\log X)^{1+\frac{1}{n-1.5}}}.
\]
Thus, we get the following theorem.
\begin{theorem}
If $S(X)$ denotes a set of points $P$ such that $\pounds_n(P)$ is squarefree and $\le X$ such that for any distinct points $Q,Q'\in S(X)$, $Q$ is not equivalent to $Q'$ up to the action given in \cref{symmetriesofpounds}, then 
\[
\#S(X)\gg_n\frac{X^{\frac{1}{2}+\frac{1}{n-1.5}}(\log \log X)^{\frac{1}{n-1.5}}}{(\log X)^{1+\frac{1}{n-1.5}}}
\]
provided that $n\ge 5.$
\end{theorem}
\begin{remark}
    One can easily optimize this by managing the number of prime factors of the squarefree numbers to ensure the same reduced translation orbit isn't counted too many times, might end up adding a log log term in denominator at the cost of removal of a log 
term in denominator, and sum over all squarefree values in the above range.
The full removal of $(\log\log/\log)^{\frac{1}{n-1.5}}$ however will require an improvement on the version of Ekedahl sieve used, which the authors suspect will be a bottleneck.
\end{remark}

\section{Global Counts and Lower Bounds}

\subsection{Reduced-m-polynomials/Reduced polynomials and uniqueness of Rings}
This section is a summary of results from our previous paper \cite{patil2024weaklydivisiblerings} which in turn is an adaptation/variation of ideas in \cite{BSW-monicsquarefree} concerning distinct translation orbits of monic polynomials corresponding to distinct rings.
\begin{definition}
    If $\ncp{v_1,v_2,\cdots,v_n}$ is a basis for $\R^n$ and $v_i'$ denotes the projection of $v_i$ to the space orthogonal to the space spanned by $\{v_1,v_2,\cdots, v_{i-1}\}$ and $t_i=||v_i'||,$ then we say   $\ncp{v_1,v_2,\cdots,v_n}$ is {\em Normally Minkowski Reduced} if the $t_i$'s satisfy 
    \begin{equation}\label{first 2 minkow}
    \frac{t_i}{t_2}\ge 2 \textit{ for all } 3\le i\le n \textit{ and  } \frac{t_2}{t_1}\ge 2
    \end{equation}
\end{definition}
\begin{definition}
    We say a (real) polynomial $f$ is a Reduced-$m$-polynomial if
    \begin{itemize}     
        \item The canonical basis for $R'_{(f,m)}$ is Normally Minkowski Reduced when seen as $R'_{(f,m)}\subseteq \R^r\oplus\C^s$ under the canonical norm.
\end{itemize}
\end{definition}
\begin{remark}
     $f$ is a Reduced-$m$-polynomial if and only if for any real $r,$ $f_r$ is also a Reduced-$m$-polynomial.
    Furthermore, the fact that the standard basis for $R_{(f,m)}$ is Normally Minkowski Reduced implies that $R_{f,m_1,m_2}$ is Normally Minkowski Reduced  for any $m_1, m_2$ satisfying $m_1m_2=m.$  
\end{remark}
\begin{theorem}\label{distinct-rings-old}
    Given polynomials $f$ and $g$ of degree $n\ge 4$ such that $f$ is a Reduced-$m$-polynomial weakly divisible by $m$ at $e$, and $g$ is a Reduced-$m'$-polynomial weakly divisible by $m'$ at $d$ ($m,m'\ge 1$) with $R'_{(f_e,m)}= R'_{(g_d,m)}$, then 
    \[
    m=m' \textit{ and }\exists r\in \Z: g(x)=f(x+mr-d+e).
    \] 
\end{theorem}
Naturally, this slight modification of the given theorem also holds.
\begin{theorem}\label{distinct-rings}
    Given polynomials $f$ and $g$ of degree $n\ge 5$ such that $f$ is a Reduced-$m$-polynomial weakly divisible by $m$ at $(m_1,e,m_2)$, and $g$ is a Reduced-$m'$-polynomial weakly divisible by $m'$ at $(m_1',d,m_2')$ ($m,m'\ge 1$) with $R'_{(f_e,m_1,m_2)}= R'_{(g_d,m_1',m_2')}$, then 
    \[
    m_i=m_i' \textit{ and }\exists r\in \Z: g(x)=f(x+m_2r-d+e).
    \] 
    In particular, $m=m'$ and $(m_1,e,m_2)=(m_1',d,m_2')\in \P(\Z/m\Z).$
\end{theorem}
Note the necessity of $n\ge 5$ coming from the consideration of $\ncp{1,B_1,B_2,B_3}$ to establish $m_1'=m_1$ and $a_0=b_0.$

\begin{lemma}\label{minkow}If $f$ is a real monic polynomial with non-zero discriminant, then there exists a $\rho_f\in \R^{+ve}$ (continuously varying with $f$) such that $\lambda\rho^{n} f(\frac{x}{\rho})$ is a Reduced-$1$-polynomial for all $\rho\ge \rho_f$ and $\lambda\ge 1$.
If $\rho \ge m^{1/(n-2)}\rho_f$ where $m>0$ and $\lambda\ge 1$, the polynomial $\lambda\rho^{n} f(\frac{x}{\rho})$ is a Reduced-$m$-polynomial.
\end{lemma}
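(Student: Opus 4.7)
The strategy is to trace how the Minkowski embedding of the canonical basis of $R'_{(g_l,m)}$ scales with the parameters. Writing $f(x) = x^n + a_{n-1}x^{n-1}+\cdots+a_0$ and $g(x) := \lambda\rho^n f(x/\rho)$, the coefficients of $g$ in the notation of \cref{basisbinary} are $b_n = \lambda$ and $b_{n-i} = \lambda\rho^i a_{n-i}$, while its roots are $\alpha^{(j)} = \rho\beta^{(j)}$ for $\beta^{(j)}$ the roots of $f$. Substituting $\alpha = \rho\beta$ into the canonical basis formula yields the clean identity
\[
B_k^g \;=\; \lambda \rho^k \,\tilde B_k \quad (1 \le k \le n-1), \qquad B_0^g \,=\, \tilde B_0 \,=\, 1,
\]
where $\tilde B_k = \beta^k + a_{n-1}\beta^{k-1}+\cdots+a_{n-k+1}\beta$ is the $k$-th canonical basis element of $R_f$. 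This is the central computation; everything that follows is a direct consequence.

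Under the canonical norm on $R_g \otimes \R \simeq \R^r \oplus \C^s$ (which coincides, as a subspace of $\R^r\oplus\C^s$, with $R_f\otimes\R$ via the conjugates of $\beta$), linearity of orthogonal projection combined with the equality $\mathrm{span}(B_0^g,\ldots,B_{k-1}^g) = \mathrm{span}(\tilde B_0,\ldots,\tilde B_{k-1})$ shows that the Gram-Schmidt norms satisfy $t_1 = \tilde t_1$ and $t_{k+1} = \lambda\rho^k \tilde t_{k+1}$ for $1 \le k \le n-1$. The Normally Minkowski Reduced conditions of \cref{first 2 minkow} therefore read $\lambda\rho\, \tilde t_2/\tilde t_1 \ge 2$ and $\rho^{i-2}\tilde t_i/\tilde t_2 \ge 2$ for $3 \le i \le n$. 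Setting
\[
\rho_f \,:=\, \max\Bigl(\tfrac{2\tilde t_1}{\tilde t_2},\ \max_{3 \le i \le n}\bigl(\tfrac{2\tilde t_2}{\tilde t_i}\bigr)^{1/(i-2)}\Bigr)
\]
makes them all hold for every $\rho \ge \rho_f$ and every $\lambda \ge 1$, and $\rho_f$ depends continuously on $f$ wherever $\disc(f) \ne 0$ (within a stratum of fixed real/complex signature). Translation invariance in $l$ is automatic: the transfer matrix of \cref{l matrix of transformation} is upper unitriangular, so the Gram-Schmidt norms of the canonical basis of $R_{g_l}$ match those of $R_g$, reconfirming the remark that follows the definition of Reduced-$m$-polynomial.

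For the general $m$ statement, the canonical basis of $R'_{(g_l,m)}$ differs from that of $R_g$ only in its final vector, which becomes $B_{n-1}^g/m$. By linearity of orthogonal projection the last Gram-Schmidt norm rescales to $\lambda\rho^{n-1}\tilde t_n/m$ while all earlier norms are unaffected. The only condition that is newly strengthened is $\rho^{n-2}\tilde t_n/(m\tilde t_2) \ge 2$, i.e.\ $\rho^{n-2} \ge 2m\tilde t_2/\tilde t_n$; since by construction $\rho_f^{n-2} \ge 2\tilde t_2/\tilde t_n$, the hypothesis $\rho \ge m^{1/(n-2)}\rho_f$ gives $\rho^{n-2} \ge m\rho_f^{n-2} \ge 2m\tilde t_2/\tilde t_n$, and the remaining conditions only become easier as $\rho$ grows. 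The main (and mild) obstacle is continuity of $\rho_f$ across loci where the real signature of $f$ changes; this is not needed globally, however, since the lemma is applied only on compact subsets of $\{\disc(f)\ne 0\}$ of fixed signature (the boxes $B_i$ of \cref{Boxesforsieving}), where $\rho_f$ is continuous and bounded.
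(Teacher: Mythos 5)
Your proof is correct and takes essentially the same route as the paper's: you derive the scaling $B_k^g = \lambda\rho^k\tilde B_k$ of the canonical basis, read off the Gram--Schmidt factors $t_1,\lambda\rho t_2,\ldots,\lambda\rho^{n-1}t_n$, obtain the identical formula for $\rho_f$, invoke the upper-unitriangular transfer matrix for translation invariance, and replace the last factor by $\lambda\rho^{n-1}t_n/m$ to get the $m^{1/(n-2)}$ rescaling. Your extra caveat about continuity of $\rho_f$ across signature strata is harmless but unnecessary: with the Minkowski (sum of squares of conjugates) norm the Gram matrix of the canonical basis depends continuously on $f$ throughout $\{\disc(f)\ne 0\}$, so $\rho_f$ is continuous there without restricting to a fixed signature.
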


\begin{definition}
    We define the height of a binary $n$-ic form $f=a_0x^n+a_1x^{n-1}y+\cdots a_ny^n$, satisfying $a_0\neq 0$, to be
    \[
    H(f)=\max\{|a_i/a_0|^{1/i}\}.
    \]
\end{definition}
The following theorems follow from standard arguments from the previously mentioned papers. One starts with a polynomial (monic) $f$ of degree $n$ and height $1$ with a non-zero discriminant, say $x^n+1$. Since the discriminant is a polynomial, one can find a neighbourhood box (also of height one, consisting of monic polynomials) of $x^n+1$ such that the discriminant remains positive in the box, say $B$, given by $|a_i|\le c_i <1$ for all $1\le i\le n-1.$ Since $\rho_f$ is a continuous function on this compact region, it has a maximum in this region, say $\rho_B.$ Then, forms that are deformed (via $f\longrightarrow f(H(f)x)/H(f)^n$) into forms of height one in box $B$ with height greater than or equal to $\rho_B$ are reduced monic polynomials. And thus, homogenizations of polynomials of the type $\lambda f$, where $\lambda\ge 1$ and $f$ is a reduced polynomial, are reduced. Similarly, forms that are deformed (via $f\longrightarrow f(H(f)x)/H(f)^n$) into forms of height one in box $B$ with height greater than or equal to $m^{1/(n-2)}\rho_B$ are reduced monic polynomials. And thus, homogenizations of polynomial of the type $\lambda f$, where $\lambda\ge 1$ and $f$ is a reduced polynomial, are reduced forms. We thus get the following theorems.
\begin{theorem}\label{reducedregion}
    There exist constants $c_i$ and $d_i$ such that every form $a_0x^n+a_1x^{n-1}y+\cdots +a_ny^n$ is $1$-reduced if for all $0\le i \le n$, we have $c_is <a_i<d_is$. We will assume $c_0>0.$
\end{theorem}
\begin{theorem}\label{reducedmmonic}
    There exist positive constants $\alpha_i$ such that every monic polynomial $x^n+a_1x^{n-1}+\cdots+a_{n-1}x+a_n$ is $m$-reduced 
if for all $1\le i\le n-1$, we have $|a_i|\le \alpha_i t^i, \alpha_nt^n\le  a_n\le\alpha_{n+1}t^{n}$ and $cm^{n/(n-2)}\le \alpha_nt^n$.
\end{theorem}
\begin{theorem}\label{reducedmforms}
    There exist positive constants $\mathcal{C}_i,\mathcal{D}_i$ such that every monic polynomial $a_0x^n+a_1x^{n-1} y + \cdots + a_{n-1} xy^{n-1} + a_ny^n$ is $m$-reduced if for all $0\le i\le n-1,$ we have $\mathcal{C}_i st^i\le a_i\le \mathcal{D}_i st^i$ and  $\mathcal{C}_nst^n\le  a_n\le \mathcal{D}_nst^{n}$ and $ m^{n/(n-2)}\le \mathcal{C}_nt^n.$ We will assume $\mathcal{C}_0>0.$
\end{theorem}

\subsection{Counting distinct weakly divisible rings coming from ultra weakly divisible polynomials}

In this section, we wish to compute $m$-reduced ultra weakly divisible forms.
We consider two cases: one where the form is monic, which will correspond to number-fields of the type $\Z[\alpha,\beta]$ where $\alpha\cdot\beta \in \Z.$ This should be of independent interest due to the structure and will be the best $\limsup$ bound for number-fields with bounded discriminant until the next section, where we will count number-fields whose ring of integers is of the type $\Proj(\Z[X,Y,Z]/I),$ where $I$ is a homogeneous ideal containing an element of the form $XY-aZ^2$ for some $a\in \Z.$ We will also reinterpret these in terms of counting unramified $A_n$ extensions over $\Q(\sqrt{s})$ for $s\le X.$

We count in the box defined by \cref{reducedmforms}, using the exact same procedure as above, but with a minor change.
Since we wish to make the theorem true for as large an $m$ as possible, we will actually only consider boxes of side length $P_M:=\prod_{p<M}p$, and we will finally give an upper bound for the number of forms that are weakly divisible by $m$ but not strongly divisible by $m$ by fixing the first $n-3$ coefficients and bounding the number of potential values for the $(n-2)^{th}$ coefficient.
In other words, we make boxes of dimensions $
(P_M,P_M,\cdots,P_M, mP_M, mP_M, m^2P_M)$.

Given a solution $\bmod P_M$ and its unique lifts in the chosen box for the first $n-3$ coefficients given by $R_1,R_2,\cdots,R_{n-3}$, we consider the set of elements in $\Z/m\Z$ which do not solve 
$T\disc(X^{n-2}+R_1X^{n-3}+\cdots+R_{n-3}X+T)\equiv 0\bmod m$.
Clearly, this will have at most $n-1$ solutions modulo a prime (the polynomial in $T$ is always degree $n-2$ as long as $\gcd(n,m)=1$, from \cref{Structuredisc} applied to monic polynomials). For any such non-solution, in every box above we have a unique lift, as there will be a unique element modulo $mP_M$ satisfying this property.

Similarly, once we have chosen $R_1,R_2,\cdots, R_{n-2}$, we do the exact same thing to get the right choice for $R_{n-1}$ (where $R_{n-1}$ is chosen to be a lift of $-nl^{n-1}-(n-1)R_1l^{n-2}-\cdots -2R_{n-2}l \bmod m$ and the solution chosen modulo $P_M$), and $R_n$ (where $R_n$ is chosen to be a lift of $-l^{n}-R_1l^{n-2}-\cdots -R_{n-2}l^2-R_{n-1}l \bmod m^2$ and our chosen solution modulo $P_M$).
Basically, given a solution $\bmod P_M$ and its unique lifts in the chosen box for the first $n-3$ coefficients given by $R_1,R_2,\cdots,R_{n-3}$, we consider the set of elements in $\Z/m\Z$ which do not solve 
$T\disc(X^{n-2}+R_1X^{n-3}+\cdots+R_{n-3}X+T)\equiv 0\bmod m$.
Clearly, this will have at most $n-1$ solutions modulo a prime (the polynomial in $T$ is always degree $n-2$ as long as $\gcd(n,m)=1$, from \cref{Structuredisc} applied to monic polynomials). For any such non-solution, in every box above we have a unique lift, as there will be a unique element modulo $mP_M$ satisfying this property.

Note that the probability of a monic polynomial being not strongly divisible $\bmod p$ is 
\[
(1-1/p^2).
\]
We thus get the following.

\begin{theorem}
    The number of ultra weakly divisible reduced-$m$-monic polynomials with height $\le t$ that are weakly divisible by $m$ (assuming $p|m\implies p>n$) is 
    \[
    \gg_n \frac{t^{\frac{n(n+1)}{2}}}{m^2}\prod_{p|m}\left(1-\frac{n-1}{p}\right)
    \]
    provided that $\frac{t^n}{m^2}\gg \frac{\log t}{\log \log t}.$
\end{theorem}

Since each translation orbit can have at most $O(t)$ reduced-$m$-monic polynomials among monic polynomials with height $\le t$, we can say the following.
\begin{theorem}
    The number of distinct ultra weakly divisible reduced-$m$-monic polynomial translation orbits that are weakly divisible by $m$ and intersect with the space of monic polynomials with height $\le t$ is 
    \[
    \gg_n \frac{t^{\frac{n(n+1)}{2}-1}}{m^2}\prod_{p|m}\left(1-\frac{n-1}{p}\right)
    \]
    provided that $\frac{t^n}{m^2}\gg \frac{\log t}{\log \log t}.$
\end{theorem}
Since all of these correspond to an arithmetic object with discriminant $\le t^{n(n-1)}/m^2$, we set $m^2X=t^{n(n-1)}$ to get the following theorem.
\begin{theorem}
    The number of distinct ultra weakly divisible reduced-$m$-monic polynomial translation orbits that are weakly divisible by $m$ and correspond to a weakly divisible ring of discriminant $\le X$ is 
    \[
    \gg_n \frac{X^{\frac{1}{2}+\frac{1}{n}}}{m^{1-\frac{2}{n}}}\prod_{p|m}\left(1-\frac{n-1}{p}\right)
    \]
    provided that $\frac{X^{\frac{1}{n-1}}}{m^{2-\frac{2}{n-1}}}\gg \frac{\log X}{\log \log X}.$
\end{theorem}
Summing over $m$ in the given range gives the following.

\begin{theorem}
    The number of distinct ultra weakly divisible rings with squarefree discriminant $\le X$ associated to monic polynomials is 
    \[
    \gg_n X^{\frac{1}{2}+\frac{1}{n-1-\frac{1}{n-1}}}\left(\frac{\log\log X}{\log X}\right)^{\frac{1}{n-1-\frac{1}{n-1}}}.
    \]
\end{theorem}
\begin{discussion}
    Since these are all ReSMOs, \cref{monicUWD} combined with \cref{LRGST M} tells us these are all ReSMOs in a number-field $\K$ with squarefree discriminant such that $\OK=\Z[\alpha,\beta]$ where $\alpha\beta\in \Z.$ Summing \cref{Number of ReSMOs} over fields of squarefree discriminant bounded by $X$ to dominate this value gives us the following theorem.
\end{discussion}

\begin{theorem}
    If $a_s$ denotes the number of number-fields with squarefree discriminant $s$ whose ring of integers can be written as $\Z[\alpha,\beta]$ where $\alpha\beta\in \Z,$ then 
    \[
    \sum_{s\le X} \frac{a_s}{\sqrt{s}}\gg_n X^{\frac{1}{n-1}+\frac{1}{n(n-1)(n-2)}}\frac{(\log\log X)^{\frac{1}{n-1}+\frac{1}{n(n-1)(n-2)}}}{(\log X)^{\binom{n}{2}-1+\frac{1}{n-1}+\frac{1}{n(n-1)(n-2)}}}.
    \]
\end{theorem}
\begin{discussion}\label{UnweightedUnramifiedA_n}
    Note that applying the tail end estimates in \cite{BSW-monicsquarefree} to count weakly divisible rings associated to monic polynomials as done to binary forms in \cite{patil2024weaklydivisiblerings} (when seen in the context of \cref{monicUWD}) gives us that the number of number-fields $\K$ such that $\OK=\Z[\alpha,\beta]$ (we can add the additional condition that $\disc(\OK)$ is squarefree for an unramified $A_n$ extension of $\Q(\sqrt{s})$ interpretation below) where $\alpha\beta\in \Z$ and $\disc(\OK)\le X$ is 
    \[
    \gg_n X^{\frac{1}{2}+\frac{1}{n-\frac{4}{5(n-1)}}}
    \]
    when $n\ge 3$.
    On the other hand, the above theorem gives the best exponent $c_n$ for when 
    \[
    \limsup\frac{N(n,X)}{X^{c_n-\epsilon}}=\infty.
    \]
    Linnik's conjecture tells us $N(n,X)\sim r_nX,$ and thus the largest $c_n$ expected here is $1$. We will improve this in the next main result. 
\end{discussion}

We now apply this setup to count weakly divisible rings associated to ultra-weakly divisible binary forms. To keep the rings distinct, we will count $m$-reduced forms that are ultra-weakly divisible and weakly divisible by $m.$ More specifically, we apply it to the region given by \cref{reducedmforms}. However, we will just set $t=m^{1/(n-2)}/(\mathcal{C}_n)^{1/n}$ in \cref{reducedmforms} and apply the exact same procedure as above, making boxes of side lengths $(P_M,P_M,\cdots,P_M, mP_M, mP_M, m^2P_M)$ and applying Theorem 1(Disc Ekedahl Modular) in \cite{patilEkedahlSingularDiscriminant} with $M=M$, $A=\mathcal{D}_{n-1}sm^{(n-1)/(n-2)}/(\mathcal{C}_n)^{(n-1)/n}$ and $B=\mathcal{D}_{n}sm^{n/(n-2)}/\mathcal{C}_n$ to obtain the following.

\begin{theorem}
    The number of distinct ultra-weakly divisible reduced-$m$-forms that are weakly divisible by $m$ and lie in the space given by \cref{reducedmforms}, where $t\sim m^{1/(n-2)}$, is 
    \[
    \gg_n \frac{s^{n+1}m^{\frac{n(n+1)}{2(n-2)}}}{m^2}\prod_{p|m}\left(1-\frac{n-1}{p}\right)
    \]
    provided that $\frac{sm^{n/(n-2)}}{m^2}\gg \frac{\log s}{\log \log s}.$
\end{theorem}
Since each translation orbit can have at most $O(m^{1/(n-2)})$ reduced-$m$-forms in the given space, we get the following.
\begin{theorem}
    The number of distinct ultra-weakly divisible reduced-$m$-form translation orbits that are weakly divisible by $m$ and intersect with the space given by \cref{reducedmforms} is 
    \[
    \gg_n \frac{s^{n+1}m^{\frac{n(n+1)-2}{2(n-2)}}}{m^2}\prod_{p|m}\left(1-\frac{n-1}{p}\right)
    \]
    provided that $\frac{sm^{n/(n-2)}}{m^2}\gg_n \frac{\log s}{\log \log s}.$
\end{theorem}

Since each such object corresponds to a distinct ring with discriminant $\ll_n s^{2n-2}m^{n(n-1)/(n-2)}/m^2$, we set $m^2X\sim s^{2n-2}m^{n(n-1)/(n-2)}$ to count rings weakly divisible by $m$ that are associated to $m$-reduced ultra-weakly divisible forms with discriminant less than $X.$
\begin{theorem}
    The number of rings that are weakly divisible by $m$ with discriminant less than $X$, associated to an ultra-weakly divisible $m$-reduced form is 
    \[
    \gg_n X^{\frac{1}{2}+\frac{1}{n-1}}m^{\frac{2}{(n-1)(n-2)}-1}\prod_{p|m}\left(1-\frac{n-1}{p}\right)
    \]
    provided that $X^{\frac{n-2}{3n^2-13n+12}}\left(\frac{\log X}{\log \log X}\right)^{\frac{2(n-1)(n-2)}{3n^2-13n+12}}\gg_n m.$
\end{theorem}
Since distinct $m$ values result in distinct rings, from \cref{distinct-rings}, we can sum over all $m$ to count distinct weakly divisible rings associated to ultra-weakly divisible forms.
\begin{theorem}
    The number of distinct weakly divisible rings with discriminant less than $X$, associated to some ultra-weakly divisible forms of degree $n,$ for $n\ge 5$ is 
    \[
    \gg_n X^{\frac{1}{2}+\frac{1}{n-\frac{4}{3}}} \left(\frac{\log\log X}{\log X}\right)^{\frac{2n^2-6n+4}{3n^2-13n+12}}
    \]
\end{theorem}
Since every such ring is a ReSMO (see \cref{UWDisReSMO}), we can again use \cref{Number of ReSMOs} to get a weighted count of the number of degree-$n$ number-fields of squarefree discriminant.
Note that these are number-fields whose rings of integers look like $\Proj(\Z[X,Y,Z]/I)$ where $I$ contains a form of the type $bXY-aZ^2$.
It is more restrictive than that, actually.
\begin{theorem}\label{final}
   If $b_s$ denotes the number of number-fields of degree $n$ with discriminant $s$ such that its ring of integers is $\Proj(\Z[X,Y,Z]/I)$ where $I$ contains a quadratic of the form $aXY-bZ^2$, for a squarefree number $s$, then
    \[
    \sum_{s\le X}\frac{b_s}{\sqrt{s}}\gg_n X^{\frac{1}{2}+\frac{1}{n-\frac{4}{3}}} \frac{(\log\log X)^{\frac{2n^2-6n+4}{3n^2-13n+12}}}{(\log X)^{\binom{n}{2}-1 +\frac{2n^2-6n+4}{3n^2-13n+12}}}.
    \]
\end{theorem}
\begin{corollary}
    \[
    \limsup_{X\longrightarrow\infty} \frac{N(n,X)}{X^{\frac{1}{2}+\frac{1}{n-4/3}-\epsilon}}=\infty.
    \]
\end{corollary}
\section{Unramified $A_n$-extensions of $\Q(\sqrt{s})$ as $s$ varies}
This section is motivated by \cite{bhargava2022squarefree}, and is natural as we always count number-fields with squarefree discriminant $s$, which can be interpreted as an unramified $A_n$ extension of $\Q(\sqrt{s})$. To formalize this:

    \begin{definition}
        We define $Cl_\K[G]$ to denote the Galois group of a field extension of $\K$ that is the concatenation of all unramified $H$ extensions for all $H$ for which there exists a surjective group homomorphism to $H$ from $G.$ 
    \end{definition}
Thus, $Cl_\K[C_n]=Cl_\K[n]$ and for a simple group $G$, $Cl_\K[G]$ will be $G^k$, where $k$ is the number of (distinct) unramified $G$-extensions of $\K.$ We naturally conjecture the following, which is true for abelian groups for all number-fields and true for all groups over $\Q$.
\begin{conjecture}
    For any number-field $\K,$ $Cl_\K[G]$ is finite. 
\end{conjecture}
Thus, degree-$n$ $S_n$-extensions of $\Q$ with squarefree discriminant, say $s$, are in one-to-one correspondence with degree-$n$ $A_n$-unramified extensions of $\Q(\sqrt{s})$. 

    \begin{corollary}
        $\sum_{s<X}\log|Cl_{\Q(\sqrt{s})}[A_n]|\gg_n X^{\frac{1}{2}+\frac{1}{n-\frac{4}{5(n-1)}}}.$
    \end{corollary}

    Note that Theorem 4 in \cite{bhargava2022squarefree} can be equivalently quoted as follows,
    \begin{theorem}
        $\sum_{s<X}\log|Cl_{\Q(\sqrt{s})}[A_n]|\gg_n X^{\frac{1}{2}+\frac{1}{n-1}}.$
    \end{theorem}
These results, as well as previous ones, originate from very specific geometries.
Before \cite{bhargava2022squarefree}, all results were based on monogenic number fields which are rings naturally associated to points in $\A^1(\Zbar).$ In \cite{bhargava2022squarefree}, they use rings naturally associated to $\P^1(\Zbar)=\P^1(\Qbar).$ Our previous work \cite{patil2024weaklydivisiblerings} uses number fields whose rings of integers are naturally associated to points on $\P^2(\Zbar)=\P^2(\Qbar)$ satisfying $aXY-bZ^2$.

Thus, as a corollary of our final result (\cref{final}), we get:
\begin{corollary}
    \[
    \sum_{s<X}\frac{\log|Cl_{\Q(\sqrt{s})}[A_n]|}{\sqrt
    {s}}\gg_n X^{\frac{1}{n-\frac{4}{3}}}\frac{(\log\log X)^{\frac{2n^2-6n
    +4}{3n^2-13n+12}}}{(\log X)^{\binom{n}{2}-1 +\frac{2n^2-6n
    +4}{3n^2-13n+12}}}
    \]
\end{corollary}
\section{Acknowledgements}
We would like to thank Professor Kumar Murty, Professor Arul Shankar, Professor Jacob Tsimmerman for discussions and insights. We would also like to thank Dr. Andy O'Desky for help with a mistaken assertion in a previous draft.

\bibliographystyle{abbrv}
\bibliography{main}
\end{document}